\newcommand{\R}{\mathbb{R}}
\newcommand{\N}{\mathbb{N}}
\newcommand{\erf}{\mathrm{erf}}
\newcommand{\F}{\mathcal{F}}
\newcommand{\Pro}{\mathbb{P}}
\newenvironment{remark} {\par {\noindent \it \sc Remark.} \small \it } {}
\newcommand{\ta}{\tau_\alpha}
\newcommand{\Exp}[1]{\mathbb{E}\left [ #1 \right]}
\newcommand{\review}[1]{{{#1}}}
\title{Noise-induced behaviors in neural mean field dynamics}
\author{Jonathan Touboul\footnotemark[1]\ \footnotemark[3]\ \footnotemark[4] \and Geoffroy Hermann\footnotemark[5] \and Olivier Faugeras\footnotemark[5]}
\begin{document}
	
\maketitle

\renewcommand{\thefootnote}{\fnsymbol{footnote}}
\footnotetext[1]{{ The Mathematical Neuroscience Lab, Coll\`ege de France, Center of Interdisciplinary Research in Biology, 11 place Marcelin Berthelot, 75005 Paris,  CNRS UMR 7241m INSERM U1050, Universit\'e Pierre et Marie Curie ED 158. MEMOLIFE Laboratory of excellence and Paris Science Lettre. 11, place Marcelin Berthelot, 75005 Paris, France.
}}
\footnotetext[3]{{ BANG Laboratory, INRIA Rocquencourt, France
}}
\footnotetext[4]{{\tt jonathan.touboul@inria.fr}}
\footnotetext[5]{{ NeuroMathComp Laboratory, INRIA/ENS, 23 avenue d'Italie, 75013 Paris, France
}}

\begin{abstract}
The collective behavior of cortical neurons is strongly affected by the presence of noise at the level of individual cells. In order to study these phenomena in large-scale assemblies of neurons, we consider networks of firing-rate neurons with linear intrinsic dynamics and nonlinear coupling, belonging to a few types of cell populations and receiving noisy currents. Asymptotic equations as the number of neurons tends to infinity (mean field equations) are rigorously derived based on a probabilistic approach. These equations are implicit on the probability distribution of the solutions which generally makes their direct analysis difficult. However, in our case, the solutions are Gaussian, and their moments  satisfy a closed system of nonlinear ordinary differential equations (ODEs), which are much easier to study than the original stochastic network equations, and the statistics of the empirical process uniformly converge towards the solutions of these ODEs. Based on this description, we analytically and numerically study the influence of noise on the collective behaviors, and compare these asymptotic regimes to simulations of the network. We observe that the mean field equations provide an accurate description of the solutions of the network equations for network sizes as small as a few hundreds of neurons. In particular, we observe that the level of noise in the system qualitatively modifies its collective behavior, producing for instance synchronized oscillations of the whole network, desynchronization of oscillating regimes, and stabilization or destabilization of stationary solutions. These results shed a new light on the role of noise in shaping collective dynamics of neurons, and gives us clues for understanding similar phenomena observed in  biological networks. 
\end{abstract}

\begin{keywords} 
mean field equations, neural mass models, bifurcations, noise, dynamical systems
\end{keywords}

\begin{AMS}
34C15, 
34C23, 
60F99, 
60B10, 
34C25  
\end{AMS}

\pagestyle{myheadings}
\thispagestyle{plain}
\markboth{J. TOUBOUL, G. HERMANN \& O. FAUGERAS}{Noise effects in mean field dynamics}

\section*{Introduction}
The brain is composed of a very large number of neurons interacting in a complex nonlinear fashion and subject to noise. 
Because of these interactions, stimuli tend to produce coherent global responses, often with high reliability. 
At the scale of single neurons the presence of noise and nonlinearities often results in highly intricate behaviors. However, at larger scales, neurons form large ensembles that share the same input and are strongly connected, and at these scales, reliable responses to specific stimuli may arise. Such population assemblies (cortical columns or cortical areas) feature a very large number of neurons. 
Understanding the global behavior of these large-scale neural assemblies has been a focus of many investigations in the past decades. 
One of the main interests of large-scale modeling is to characterize brain function at the scale most non-invasive imaging techniques operate. 
Its relevance is  also connected to the fact that anatomical data recorded in the cortex reveal its columnar organization at scales ranging from about $50 \mu m$ to $1 mm$. These so-called cortical columns contain \review{on} the order of hundreds to hundreds of thousands of neurons, and are generally dedicated to specific functions. For example, in the visual cortex V1, they respond to preferred orientations in visual stimuli. 
These anatomical and functional organizations point towards the fact that a significant amount of information processing does not occur at the scale of individual neurons but rather corresponds to a mesoscopic signal arising from the collective dynamics of many interacting neurons.

In the computational neuroscience community, this problem has been mainly studied using approaches based on the statistical physics literature. Most models describing the emergent behavior arising from the interaction of neurons in large-scale networks have relied on continuum limits since the seminal works of Wilson and Cowan and Amari \cite{amari:72,amari:77,wilson-cowan:72,wilson-cowan:73}. Such models represent the activity of the network through a global variable, the population-averaged firing rate, which is generally assumed to be deterministic. Many analytical  properties and numerical results have been derived from these equations and related to cortical phenomena, for instance in the case of the problem of spatio-temporal pattern formation in spatially extended models (see e.g.~\cite{coombes-owen:05,ermentrout:98,ermentrout-cowan:79,bressloff-cowan-etal:02}). This approach implicitly makes the assumption that the effect of noise vanishes in large populations. 

However, increasingly many researchers now believe that the different intrinsic or extrinsic noise sources participate in the processing of information. Rather than having a pure disturbing effect there is the interesting possibility that noise conveys information and that this can be an important principle of brain function \cite{rolls-deco:10}. In order to study the effect of the stochastic nature of the firing in large networks, many authors strived to introduce randomness in a tractable form. A number of computational studies that successfully addressed the case of sparsely connected networks of integrate-and-fire neurons are based on the analysis of large assemblies that fire in an asynchronous regime~\cite{abbott-van-vreeswijk:93,amit-brunel:97,brunel-hakim:99}. Because of the assumption of sparse connectivity, correlations of the synaptic inputs can be neglected for large networks. The resulting asynchronous irregular states resemble the discharge activity recorded in the cerebral cortex of awake animals~\cite{destexhe:08}. 

Other models have been introduced to account for the presence of noise in neuronal networks, such as the population density method and related approaches \cite{cai-tao-etal:04}, allowing efficient simulation of large neuronal populations. In order to analyze the collective dynamics, most population density-based approaches involve expansions in terms of the moments of the resulting random variables, and the moment hierarchy needs to be truncated in order to get a closed set of equations, which can raise a number of technical issues (see e.g.\cite{ly-tranchina:07}). 

Yet other models of the activity of large networks are based on the definition of a Markov chain governing the firing dynamics of the neurons in the network, where the transition probability satisfies a differential equation called the \emph{master equation}. Seminal works of the application of such modeling for neuroscience date back to the early 90s and have been recently developed by several authors \cite{ohira-cowan:93,elboustani-destexhe:09}. Most of these approaches are proved correct in some parameter regions using statistical physics tools such as path integrals~\cite{buice-cowan:07} and Van-Kampen expansions~\cite{bressloff:09}. They motivated a number of interesting studies of quasicycles~\cite{bressloff:10} and power-law distribution of avalanche phenomena~\cite{benayoun-cowan-etal:10}. In many cases the authors consider one-step Markov chains, implying that at each update of the chain, only one neuron in the whole network either fires or stops firing, which raises biological plausibility issues. Moreover, analytical approaches mainly address the dynamics of a finite number of moments of the firing activity, which can also raise such issues as the well-posedness \cite{ly-tranchina:07} and the adequacy of these systems of equations with the original Markovian model~\cite{touboul-ermentrout:11}.

In the present study, we apply a probabilistic method to derive the limit behavior resulting from the interaction of an infinite number of firing-rate neurons nonlinearly interconnected. This approach differs from other works in the literature in that it relies on a description of the microscopic dynamics (neurons in the network), and does not make the assumption of a sparse connectivity. Our model takes into account the fact that cortical columns feature different populations. The approach consists in deriving the limit equations as the total number of neurons tends to infinity, based on results obtained in the field of large-scale systems of interacting particles. This problem has been chiefly studied for solving statistical physics questions, and has been a very active field of research in mathematics during the last decades~\cite{mckean:66,dobrushin:70,tanaka:78,sznitman:89}. In general, the equations obtained by such rigorous approaches are extremely hard to analyze. They can be either seen as  implicit equations in the set of stochastic processes, or as non-local partial differential equations on the probability distribution through the related Fokker-Planck equations. But in both cases, understanding the dynamics of these equations is very challenging, even for basic properties such as the existence and uniqueness of stationary solutions and a priori estimates~\cite{herrmann:09}. It appears even more difficult to understand qualitatively the effects of noise on the solutions and to interpret them in terms of the underlying biological processes. 

In the present article we aim at answering some of these questions.  In the case we address, the problem is rigorously reducible to the analysis of a set of ordinary differential equations. This is because the solution of the mean field equations is a Gaussian process. It is therefore completely determined by its  first two moments which we prove to be the solutions of ordinary differential equations. This  allows us to go much deeper into the analysis of the dynamical effects of the parameters, in particular those related to the noise, and to understand their influence on the solutions. The analysis of this Gaussian process also provides a rich amount of  information about the  non-Gaussian solution of the network when its size is large enough.  

The paper is organized as follows. In the first section we deal with the modeling, the derivation of the mean field equations and of the related system of ordinary differential equations. We then turn in section \ref{sec:bifs} to the analysis of the solutions of these equations and the influence of noise. We show in details how noise strongly determines the activity of the cortical assembly. We then return to the  problem of understanding the behavior of finite-size (albeit large) networks in section~\ref{sec:Network} and compare their behavior with those of the solutions of the mean field equations (infinite-size network). The analysis of the network behaviors in the different regimes of the mean field equations provides an interpretation of the individual behaviors responsible for collective reliable responses. This has a number of consequences that are developed in section~\ref{sec:discussion}.

\section{Model and mean field equations}\label{sec:Models}
In all the article, we work in a complete probability space $(\Omega,\F,\Pro)$ assumed to satisfy the usual conditions. 

We are interested in the large scale behavior arising from the nonlinear coupling of a large number $N$ of stochastic diffusion processes representing the membrane potential of neurons in the framework of rate models (see e.g.~\cite{dayan-abbott:01,gerstner-kistler:02b}). Hence the variable characterizing the neuron state is its firing rate, that exponentially relaxes to zero when it receives no input, and that integrates both external input and the current generated by its neighbors. The network is composed of $P$ neural populations that differ by their intrinsic dynamics, the input they receive and the way they interact with the other neurons. Each population $\alpha \in \{1,\ldots,P\}$ is composed of $N_{\alpha}$ neurons, and we assume that the ratio $N_{\alpha}/N$ converges to a constant $\delta_{\alpha}$ in $]0,1[$ when the total number of neurons $N$ becomes arbitrarily large. We define the population function $p$ that maps the index $i\in\{1,\ldots N\}$ of any neuron to the index  $\alpha$ of the population neuron $i$ belongs to: $p(i)=\alpha$.

For any neuron $i$ in population $\alpha$, the membrane potential $V^i_t$ has a linear intrinsic dynamics with a time constant $\tau_{\alpha}$. The membrane potential of each neuron returns to zero exponentially if it receives no input. The neuron $i$ in population $\alpha$ receives an external current, which is the sum of a deterministic part $I_{\alpha}(t)$ and a stochastic additive noise driven by $N$ independent adapted Brownian motions $(B^i)_{i=1\ldots N}$ modulated by the diffusion coefficients $\lambda_{\alpha}(t)$. This additive noise term accounts for different biological phenomena~\cite{aldo-faisal:08}, such as intrinsically noisy external input, channel noise~\cite{white-rubinstein:00} produced by the random opening and closing of ion channels, thermal noise and thermodynamic noise related to the discrete nature of charge carriers.

Neurons also interact through their firing rates, classically modeled as a sigmoidal transform of their membrane potential. These sigmoidal functions only depend on the population $\alpha$ the neuron $i$ belongs to: $S_{\alpha}(V^i)$. The functions $S_{\alpha}:\R\mapsto \R$ are assumed to be smooth (Lipchitz continuous), increasing functions that tend to $0$ at $-\infty$ and to $1$ at $\infty$. The  firing rate of the presynaptic neuron $j$, multiplied by the synaptic weight $J_{ij}$, is an input current to the postsynaptic neuron $i$. We classically assume that the synaptic weight $J_{ij}$ is equal to $J_{p(i)p(j)}/N_{p(j)}$. In practice this synaptic weight randomly varies depending on the local properties of the environment. Models including this type of randomness are not covered in this paper. The scaling assumption is necessary to  ensure that the total input to a neuron does not depend on the network size. 

The network behavior is therefore governed by the following set of stochastic  differential equations:
\begin{equation}\label{eq:Network}
dV^{i}(t) = \left( -\frac 1 {\ta}  V^{i}(t) + I_{\alpha}(t) + \sum_{\beta=1}^{P} \frac{J_{\alpha\beta} }{N_{\beta}} \sum_{j:\,p(j)=\beta} S_{\beta}(V^j(t)) \right) \, dt +
\lambda_{\alpha}(t) dB^{i}_t 
\end{equation}

These equations represent a set of interacting diffusion processes. Such processes have been studied for instance by McKean, Tanaka and Sznitman among others~\cite{mckean:66,tanaka:83,tanaka:78,sznitman:89}. This case is simpler than the different cases treated in~\cite{touboul-faugeras:11} where the intrinsic dynamics of each individual diffusion is nonlinear. 

\review{We aim at identifying the limit in law of the activity of finite sets of neurons in the network as the number of neurons tend to infinity. The identification of this law will imply the fact that the system satisfies the \emph{propagation of chaos} property. This property states that, provided the initial conditions are independent and identically distributed for all neurons of each population (initial conditions are said to be \emph{chaotic}\footnote{\review{Note that the term \emph{chaos} is understood here in the statistical physics sense as the Boltzmann's molecular chaos ("Sto\ss zahlansatz"), corresponding to the independence between the velocities of two different particles before they collide. This is very different from the notion of chaos in deterministic dynamical systems.} }), then in the limit $N\to\infty$, all neurons behave independently, and have the same law which is given by an implicit equation on the law of the limiting process (the \emph{chaos} of the initial condition is propagated for all time $t>0$). In details, the law of $(V^{i_1}(t), \ldots, V^{i_k}(t), t\leq T)$ for any fixed $k\geq 1$ and $(i_1,\ldots,i_k) \in \N^k$, converges towards $\nu_{p(i_1)}\otimes\ldots\otimes\nu_{p(i_k)}$ when $N\to \infty$, where $\nu_{\alpha}$ denotes the law of the solution of equation \eqref{eq:MFE} corresponding to population $\alpha$.}

This convergence and the limit equations are the subject of the following theorem:

\begin{theorem}\label{thm:propagationchaos}
	Let $T>0$ a fixed time. Under the previous assumptions, we have:
	\renewcommand{\theenumi}{(\roman{enumi})}
	\begin{enumerate}
		\item The process $V^i$ for $i$ in population $\alpha$, solution of equation \eqref{eq:Network}, converges in law towards the process $\bar{V}^{\alpha}$ solution of the mean field implicit equation:
			\begin{multline}\label{eq:MFE}
			d\bar{V}^{\alpha}(t)=\left[-\frac 1 {\ta}  \bar{V}^{\alpha}(t)+I_{\alpha}(t)+\sum_{\beta=1}^P J_{\alpha\beta} \Exp{S_{\beta}(\bar{V}^{\beta}(t))}\right]dt
			+\lambda_{\alpha}(t) dB^{\alpha}(t)
			\end{multline}
			as a process for $t \in [0,T]$, in the sense that there exists $(\bar{V}_t^i)_{t\geq 0}$ distributed as $(\bar{V}^{\alpha}_t)_{t\geq 0}$ such that
			\[\Exp{\sup_{0\leq t\leq T} \vert V^i_t - \bar{V}^i_t\vert }\leq \frac {\tilde{C}(T)}{\sqrt{N}}\]
			where $\tilde{C}(T)$ is a \review{finite quantity depending on the time horizon $T$ and} on the parameters of the system. As a random variable, it converges uniformly in time in the sense that:
			\[\sup_{0\leq t\leq T} \Exp{\vert V^i_t - \bar{V}^i_t\vert }\leq \frac {C}{\sqrt{N}}\]
			where $C$ does not depend on time. In equations \eqref{eq:MFE}, the processes $(B^{\alpha}(t))_{\alpha=1 \ldots P}$ are independent Brownian motions.
		\item Equation \eqref{eq:MFE} has a unique (trajectorial and in law) solution which is square integrable.
		\item The propagation of chaos applies.
	\end{enumerate}
\end{theorem}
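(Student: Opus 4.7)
My plan is to follow the classical coupling strategy of Sznitman, with the four populations handled by population-symmetry arguments. I take the three items of the theorem roughly in the order (ii), (i), (iii), since the coupling argument for (i) presupposes existence of a solution to \eqref{eq:MFE}.

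For (ii) I would set up a contraction argument on the space $\mathcal{M}_2(C([0,T],\R^P))$ of square integrable laws, equipped with a Wasserstein-type distance. Given a candidate law $m$ with marginals $m_\beta(t)$, the drifts $\mathbb{E}_m[S_\beta(\bar V_\beta(t))]$ become deterministic, bounded functions (since each $S_\beta$ is bounded), so \eqref{eq:MFE} reduces to a standard SDE with Lipschitz drift and bounded diffusion, which admits a unique strong solution. Let $\Phi(m)$ denote its law. Using the Lipschitz property of $S_\beta$ and a Gronwall argument one shows $\Phi$ is a contraction on $[0,T_0]$ for $T_0$ small enough, and iteration over $[0,T]$ gives the unique global fixed point, which is the law $\nu$ whose marginals $\nu_\alpha$ yield the $\bar V^\alpha$.

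For (i) I would couple the network to an independent system. Fix, for each neuron $i\in\{1,\dots,N\}$, the Brownian motion $B^i$ and initial condition $V^i(0)$ appearing in \eqref{eq:Network}. Define $\bar V^i$ to be the solution of the mean-field equation corresponding to population $p(i)=\alpha$, driven by the \emph{same} $B^i$ and starting at the \emph{same} $V^i(0)$. By construction the $\bar V^i$ are independent across $i$, and $\bar V^i\stackrel{d}{=}\bar V^{p(i)}$. Subtracting the two equations eliminates the Brownian term, so
\[
 V^i(t)-\bar V^i(t)=\int_0^t\!\Bigl[-\tfrac{1}{\ta}\bigl(V^i(s)-\bar V^i(s)\bigr)+\sum_\beta J_{\alpha\beta}\Delta^i_\beta(s)\Bigr]\,ds,
\]
where $\Delta^i_\beta(s):=\frac{1}{N_\beta}\sum_{j:p(j)=\beta}S_\beta(V^j(s))-\mathbb{E}[S_\beta(\bar V_\beta(s))]$. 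I split $\Delta^i_\beta$ into a Lipschitz term $\frac{1}{N_\beta}\sum_{j:p(j)=\beta}(S_\beta(V^j)-S_\beta(\bar V^j))$ and a fluctuation term $\frac{1}{N_\beta}\sum_{j:p(j)=\beta}(S_\beta(\bar V^j)-\mathbb{E}[S_\beta(\bar V_\beta)])$. The Lipschitz piece is controlled by the quantities $\mathbb{E}|V^j-\bar V^j|$; the fluctuation piece is a centred sum of i.i.d.\ bounded variables, so $\mathbb{E}|\cdot|\le 2\|S_\beta\|_\infty/\sqrt{N_\beta}$. Taking suprema in $t$, using population symmetry to write $u_\alpha(t)=\mathbb{E}\sup_{s\le t}|V^i_s-\bar V^i_s|$ and $u(t)=\max_\alpha u_\alpha(t)$, and applying Gronwall's lemma yields the first bound with $\tilde C(T)$ exponential in $T$.

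The harder part is the uniform-in-time estimate $\sup_t\mathbb{E}|V^i_t-\bar V^i_t|\le C/\sqrt N$. The growing constant above comes from using the Lipschitz bound on $S_\beta$ to control the first piece of $\Delta^i_\beta$, which does not leverage the boundedness of $S_\beta$. To remove the time dependence one can argue as follows: because each $S_\beta$ is bounded, the contribution of the coupling term to $V^i_t-\bar V^i_t$ through $\Delta^i_\beta$ is itself uniformly bounded, and the linear intrinsic contraction $e^{-t/\ta}$ damps the past; writing $V^i_t-\bar V^i_t$ via variation of constants and using the uniform $L^2$-bound $\|\Delta^i_\beta(s)\|_2\lesssim 1/\sqrt N$ (valid for each fixed $s$, again by Lipschitz plus i.i.d.\ concentration and a Gronwall argument only on $\mathbb{E}|V^i_s-\bar V^i_s|$, not on its supremum) turns the bound into a convolution of an exponentially decaying kernel against a uniformly small source, hence uniformly $O(1/\sqrt N)$. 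This is the step I expect to require the most care, balancing the two Gronwall arguments so that the uniform-in-time estimate is obtained without an exponential blow-up in $T$.

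Finally, (iii) is a standard consequence of (i): for any fixed $k$ and indices $(i_1,\dots,i_k)$, the processes $(\bar V^{i_1},\dots,\bar V^{i_k})$ are independent with laws $\nu_{p(i_1)}\otimes\cdots\otimes\nu_{p(i_k)}$, and the coupling estimate $\mathbb{E}\sup_{t\le T}|V^{i_\ell}_t-\bar V^{i_\ell}_t|\le \tilde C(T)/\sqrt N$ implies that the joint law of $(V^{i_1},\dots,V^{i_k})$ converges to this product law in Wasserstein distance, hence weakly, as $N\to\infty$.
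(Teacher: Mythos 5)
Your proposal is correct and follows essentially the same route as the paper: Sznitman's coupling with the same Brownian motions and initial conditions, the split of the interaction error into a Lipschitz piece and a centred i.i.d.\ fluctuation piece of size $O(1/\sqrt{N_\beta})$, Gronwall for the finite-$T$ estimate, and--for the uniform-in-time bound--exactly the mechanism the paper uses, namely keeping the variation-of-constants kernel $e^{-(t-s)/\tau}$ so that Gronwall acts on a convolution inequality with a uniformly small source rather than producing an exponential in $T$. The only cosmetic difference is that you phrase existence and uniqueness as a fixed point on laws in Wasserstein distance, whereas the paper runs Picard iteration directly on the integral form of the process.
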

\review{Note that the expectation term in equation~\eqref{eq:MFE} is the classical expectation of a function of a stochastic process. In details, if $p^{\beta}_t$ is the probability density of $\bar{V}^{\beta}(t)$, $\Exp{S_{\beta}(\bar{V}^{\beta}(t))}$ is equal to $\int_{\R} S_{\beta}(x)p^{\beta}_t(x)\,dx$. }

The proof of this theorem essentially uses results from the works of Tanaka and Sznitman, summarized in~\cite{sznitman:89}. A distinction with these classical results is that the network is not totally homogeneous but composed of distinct neural populations.  Thanks to the assumption that the proportion of neurons in each population is non-trivial ($N_{\alpha}/N \to \delta_{\alpha}\in ]0,1[$), the propagation of chaos occurs simultaneously in each population yielding our equations, as it is shown in a wider setting in~\cite{touboul-faugeras:11}. The main deep theoretical distinction is that the theorem claims a uniform convergence in time: most of the results proved in the kinetic theory domain show propagation of chaos properties and convergence results only for finite time \review{intervals}, and convergence estimates diverge as the \review{interval grows}. Uniform propagation of chaos is an important property as commented in~\cite{malrieu:08}, and particularly in our case as we will further comment. Methods to prove uniformity are generally involved (see e.g.~\cite{mischler-mouhot:11} where uniformity is obtained for certain models using a dual approach based on the analysis of generator operators). Due to the linearity of the intrinsic dynamics, we provide here an elementary proof of this property in our particular system.
\begin{proof}
The existence and uniqueness of solutions can be performed in a classical fashion using Picard iterations of an integral form of equation \eqref{eq:MFE} and a contraction argument. The proof of the convergence towards this law, and of the propagation of chaos can be performed using Sznitman's powerful coupling method (see e.g.~\cite{sznitman:89}, method which dates back from Dobrushin~\cite{dobrushin:70}), that consists in \review{exhibiting an almost sure limit of the sequence of processes $V^i_t$ as $N$ goes to infinity by coupling the mean field equation with the network equation as follows. We define the} different independent processes $\bar{V}^i$ solution of equation \eqref{eq:MFE} driven by the same Brownian motion $(B^i_t)_t$ as involved in the network equation \eqref{eq:Network}, and with the same initial condition $V^i(0)$ as neuron $i$ in the network. \review{It is clear that these processes are independent (since the $(B^i_t)$ are pairwise independent) and have the same law as the solution of the mean field equation~\eqref{eq:MFE}. The almost sure convergence of $(V^i_t)$ towards $(\bar{V}^i_t)$ will therefore imply the convergence in law towards the mean field equation.} For a neuron $i$ belonging to population $\alpha$, we have:
\begin{multline*}
	V^i_t - \bar{V}^i_t=\sum_{\beta=1}^P J_{\alpha\beta} \int_0^t e^{-(t-s)/\tau_{\alpha}} \frac{1}{N_{\beta}} \sum_{j:\,p(j)=\beta} \Bigg\{ \Big(S_{\beta}(V^j_s)-S_{\beta}(\bar{V}^j_s)\Big) \\
	+ \Big(S_{\beta}(\bar{V}^j_s) - \Exp{S_{\beta}(\bar{V}^j_s)}\Big)\Bigg\}\, ds
\end{multline*}

We have, denoting by $\tau$ the maximal value of $(\tau_{\beta}, \beta=1\ldots P)$:
\begin{multline}\label{eq:vimoinsvibar}
\vert V^i_t - \bar{V}^i_t \vert \leq K_{\alpha} \int_0^t e^{-(t-s)/\tau} \max_{j=1\ldots N}\,\vert V^j_s - \bar{V}^j_s   \vert\,ds \\ 
	+ K_{\alpha}' \left\vert \frac{1}{N} \int_0^t e^{-(t-s)/\tau_{\alpha}} \sum_{j=1}^N \big(S_{p(j)}(\bar{V}^j_s) - \Exp{S_{p(j)}(\bar{V}^j_s)}\big)\, ds\right\vert,
\end{multline}
where $K_{\alpha}=\sum_{\beta}\vert J_{\alpha\beta}\vert L $. $L$ is the largest Lipschitz constant of the sigmoids $(S_{\beta}, \beta=1\ldots P)$, and $K_{\alpha}'=\max_{\beta}\vert J_{\alpha\beta}\vert N/N_{\beta}$, quantity upperbounded, for $N$ sufficiently large, by $\max_{\beta}\vert J_{\alpha\beta}\vert 2/\delta_{\beta}$. 

Since the righthand side of \eqref{eq:vimoinsvibar} does not depend on the index $i$, taking the maximum with respect to $i$ and the expected value of both sides of \eqref{eq:vimoinsvibar},  we obtain
\begin{multline}\label{eq:vimoinsvibarineq}
\Exp{ \max_{i=1\ldots N}\vert V^i_t - \bar{V}^i_t \vert} \leq  K \int_0^t e^{-(t-s)/\tau} \Exp{ \max_{j=1\ldots N}\,\vert V^j_s - \bar{V}^j_s   \vert }\,ds\\
+K'\Exp{ \max_{\alpha=1,\ldots, P}\left\vert \frac{1}{N} \int_0^t e^{-(t-s)/\tau_{\alpha}} \sum_{j=1}^N \big(S_{p(j)}(\bar{V}^j_s) - \Exp{S_{p(j)}(\bar{V}^j_s)}\big)\, ds\right\vert }
\end{multline}
Since the random variables $A_j(s)=S_{p(j)}(\bar{V}^j_s) - \Exp{S_{p(j)}(\bar{V}^j_s)}$ are independent and centered \review{(i.e. have a null expectation)}, using the fact that the sigmoids $S_{\beta}$ take their values in the interval $[0,1]$, using Cauchy-Scwartz and posing $\bar{\tau}=\max_{\alpha}\tau_{\alpha}+1=\tau+1$, we have:
\begin{align*}
	&\Exp{\max_{\alpha} \left( \frac{1}{N} \int_0^t e^{-(t-s)/\tau_{\alpha}}\sum_{j=1}^N \big(S_{p(j)}(\bar{V}^j_s) - \Exp{S_{p(j)}(\bar{V}^j_s)}\big)\,ds \right)^2} \\
        &\qquad \qquad = \frac 1 {N^2}\Exp{\max_{\alpha} \left(\int_0^t\left( e^{-(t-s)\frac{\bar{\tau}-\tau_{\alpha}}{\bar{\tau}\tau_{\alpha}}}\right)\left(\sum_{j=1}^N e^{-(t-s)/\bar{\tau}}A_j(s) \right)\right)^2 ds} \\
        &\qquad \qquad \leq \frac 1 {N^2} \Exp{\max_{\alpha} \left(\int_0^t e^{-2(t-s)\frac{\bar{\tau}-\tau_{\alpha}}{\bar{\tau}\tau_{\alpha}}}ds\right)\left(\int_0^t e^{-2(t-s)/\bar{\tau}} \big(\sum_{j=1}^N A_j(s)\big)^2 ds\right)}\\
        &\qquad \qquad \leq \frac 1 {N^2} \Exp{\left(\int_0^t e^{-2(t-s)/(\tau(\tau+1))}ds\right)\left(\int_0^t e^{-2(t-s)/(\tau+1)} \big(\sum_{j=1}^N A_j(s)\big)^2 ds\right)}\\
        &\qquad \qquad = \frac 1 {N^2} \frac{\tau(\tau+1)}{2} (1-e^{-2t/(\tau(\tau+1))}) \int_0^t e^{-2(t-s)/(\tau+1)}\Exp{\sum_{j=1}^N A_j(s)^2}ds\\
        &\qquad \qquad \leq \frac 1 {N} \frac{\tau(\tau+1)}{2} \int_0^t e^{-2(t-s)/(\tau+1)}ds\\
	&\qquad \qquad \leq \frac 1 {N} \frac{\tau(\tau+1)^2}{4}=\frac{\tau'}{N}
\end{align*}


By Cauchy-Schwartz inequality, we can  upperbound the second term of the righthand side of inequality \eqref{eq:vimoinsvibarineq} by $\sqrt{\tau'/N}$. Therefore, defining $M_t=\Exp{\max_{i}  \vert V^i_t - \bar{V}^i_t\vert}$, $K=\max_{\alpha} K_{\alpha}$ and $K'=\max_{\alpha} K_{\alpha}'$ we have:
\[M_t\leq K \int_0^t e^{-(t-s)/\tau} M_s \,ds + K'\sqrt{\frac{\tau'}{N}}\]
implying, using Gronwall's lemma, 
\[M_t \leq \frac{K'\sqrt{\tau'} e^{K\tau}}{\sqrt{N}}.\]
This inequality readily yields the almost sure convergence of $V^i_t$ towards $\bar{V}^i_t$ as $N$ goes to infinity, uniformly in time, and hence convergence in law of $V^i_t$ towards $\bar{V}^{\alpha}_t$.

The almost sure convergence of $(V^i_t)_{t\in [0,T]}$ (considered as a process) towards $(\bar{V}^i_t)_{t\in [0,T]}$ can be proved in a similar fashion. Indeed, upperbounding the exponential term in \eqref{eq:vimoinsvibar} by $1$ and taking the supremum, it is easy to see that:
\[\Exp{\sup_{0\leq t\leq T} \max_{i=1\ldots N} \vert V^i_t-\bar{V}^i_t\vert} \leq K \int_0^T \Exp{\sup_{s\in[0,t]} \max_{j=1\ldots N}\,\vert V^j_s - \bar{V}^j_s   \vert}\,dt + \frac{K'\,T} {\sqrt{N}},\]
using the fact that:
\begin{align*}
	&\Exp{ \max_{\alpha}\sup_{t\in [0,T]} \left( \frac{1}{N} \int_0^t e^{-(t-s)/\tau_{\alpha}}\sum_{j=1}^N \big(S_{p(j)}(\bar{V}^j_s) - \Exp{S_{p(j)}(\bar{V}^j_s)}\big)\,ds \right)^2} \\
	&\qquad \qquad \leq \frac T {N^2}  \int_0^T\Exp{\left \vert \sum_{j=1}^N \big(S_{p(j)}(\bar{V}^j_s) - \Exp{S_{p(j)}(\bar{V}^j_s)}\big) \right\vert^2}\,ds\\
	&\qquad \qquad = \frac T {N^2} \sum_{j=1}^N \int_0^T\Exp{ \Big \vert S_{p(j)}(\bar{V}^j_s) - \Exp{S_{p(j)}(\bar{V}^j_s)}\Big\vert ^2}\,ds\\
	&\qquad \qquad \leq \frac {T^2} {N}
\end{align*}
using the independence of the $\bar{V}^j$ and Cauchy-Schwartz inequality. This last estimate readily implies, using Gronwall's inequality:
\[\Exp{\sup_{0\leq t\leq T} \max_{i=1\ldots N} \vert V^i_t-\bar{V}^i_t\vert} \leq \frac{K'\,T e^{K\,T}}{\sqrt{N}}.\]
 
The propagation of chaos property (iii) stems from the almost sure convergence of $(V^{i_1}(t), \ldots, V^{i_k}(t), t\leq T)$ towards $(\bar{V}^{i_1}(t), \ldots, \bar{V}^{i_k}(t), t\leq T)$ \review{which are independent}, as a process and uniformly for fixed time, and is proved in a similar fashion.
\end{proof}

The $P$ equations \eqref{eq:MFE}, which are  $P$ implicit stochastic differential equations, describe the asymptotic behavior of the network. However, the characterization and simulation of their solutions is a challenge. Fortunately, due to their particular form in our setting, these equations can be substantially simplified. \review{Indeed, under some weak assumptions, the solutions of the mean field equations are shown to be Gaussian, allowing to exactly reduce the dynamics of the mean field equations to the study of coupled ordinary differential equations as we now show.}

\begin{proposition}\label{pro:gaussianSolution}
Let us assume that \review{$\bar{V}_0=(\bar{V}_0^{\alpha})_{\alpha=1...P}$ } is a P-dimensional Gaussian random variable. We have:
\renewcommand{\labelitemi}{$\star$}
	\begin{itemize}
		\item The solutions of the P mean field equations \eqref{eq:MFE} with initial conditions \review{$\bar{V}_0$} are Gaussian processes for all time.
		\item Let $\mu(t)=(\mu_{\alpha}(t))_{\alpha=1\ldots P}$ denote the mean vector of \review{$(\bar{V}^{\alpha}_t)_{\alpha=1\ldots P}$} and $v(t)=(v_{\alpha}(t))_{\alpha=1\ldots P}$ its variance. Let also $f_{\beta}(x,y)$ denote the expectation of $S_{\beta}(U)$ for  $U$ a Gaussian random variable of mean $x$ and variance $y$. We have:
			\begin{equation}\label{eq:ODE}
			\begin{cases}
				\dot{\mu}_{\alpha}(t)=-\frac 1 {\ta} \mu_{\alpha}(t) + \sum_{\beta=1}^P J_{\alpha\beta}f_{\beta}(\mu_{\beta}(t),v_{\beta}(t))+I_{\alpha}(t) & \alpha=1\ldots P\\
				\dot{v}_{\alpha}(t)=-\frac 2 {\ta} \,v_{\alpha}(t) +\lambda^2_{\alpha}(t)  & \alpha=1\ldots P
			\end{cases}
			\end{equation}
			with initial conditions \review{$\mu_{\alpha}(0) =\Exp{\bar{V}_0^{\alpha}}$ and $v_{\alpha}(0) =\Exp{(\bar{V}_0^{\alpha}-\mu_{\alpha}(0))^2}$}. In equation \eqref{eq:ODE}, the dot denotes the differential with respect to time. 
	\end{itemize}
\end{proposition}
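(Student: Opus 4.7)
The crux of the proposition is the deceptively simple observation that, once the law of the solution is fixed by the uniqueness statement of Theorem \ref{thm:propagationchaos}(ii), the drift term $\mathbb{E}[S_\beta(\bar{V}_\beta(t))]$ in \eqref{eq:MFE} is a deterministic (albeit a priori unknown) function of time. My plan is therefore to first observe that, given the solution $(\bar V_\alpha)$, one may define $m_\beta(t):=\mathbb{E}[S_\beta(\bar V_\beta(t))]$ so that each $\bar V_\alpha$ solves the ordinary linear SDE
\[
 d\bar V_\alpha(t) = \Big[-\tfrac{1}{\tau_\alpha}\bar V_\alpha(t)+I_\alpha(t)+\sum_{\beta}J_{\alpha\beta}m_\beta(t)\Big]dt+\lambda_\alpha(t)\,dB^\alpha(t),
\]
with deterministic time-dependent coefficients and with the $B^\alpha$ independent. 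Applying variation of constants yields
\[
 \bar V_\alpha(t)=e^{-t/\tau_\alpha}\bar V_\alpha^0+\int_0^t e^{-(t-s)/\tau_\alpha}\Big[I_\alpha(s)+\sum_\beta J_{\alpha\beta}m_\beta(s)\Big]ds+\int_0^t e^{-(t-s)/\tau_\alpha}\lambda_\alpha(s)\,dB^\alpha(s).
\]
The first summand is Gaussian by hypothesis, the second is deterministic, and the third is a Wiener integral of a deterministic integrand, hence Gaussian. Jointly, the vector $(\bar V_\alpha(t))_\alpha$ is a linear functional of the jointly Gaussian family $(\bar V^0,B^1,\ldots,B^P)$, so it is jointly Gaussian at every time $t$, and by the same argument the vector is a Gaussian process.

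Once Gaussianity is established, $\mathbb{E}[S_\beta(\bar V_\beta(t))]$ depends on the distribution of $\bar V_\beta(t)$ only through its mean $\mu_\beta(t)$ and variance $v_\beta(t)$, i.e.\ it equals $f_\beta(\mu_\beta(t),v_\beta(t))$. Taking the expectation on both sides of \eqref{eq:MFE} (the stochastic integral being a true martingale thanks to the square-integrability furnished by Theorem \ref{thm:propagationchaos}(ii)) immediately gives the first line of the ODE system \eqref{eq:ODE}. For the variance, I would apply Itô's formula to $(\bar V_\alpha(t)-\mu_\alpha(t))^2$, using $d(\bar V_\alpha-\mu_\alpha)=-\tfrac{1}{\tau_\alpha}(\bar V_\alpha-\mu_\alpha)dt+\lambda_\alpha(t)dB^\alpha$, which yields
\[
 d(\bar V_\alpha-\mu_\alpha)^2=-\tfrac{2}{\tau_\alpha}(\bar V_\alpha-\mu_\alpha)^2dt+2\lambda_\alpha(t)(\bar V_\alpha-\mu_\alpha)dB^\alpha+\lambda_\alpha^2(t)dt;
\]
taking expectations kills the martingale term and produces $\dot v_\alpha=-\tfrac{2}{\tau_\alpha}v_\alpha+\lambda_\alpha^2$.

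The only delicate point — and the step I expect to be the main obstacle in presenting the argument rigorously rather than computationally — is the mild circularity inherent in writing $m_\beta(t)=\mathbb{E}[S_\beta(\bar V_\beta(t))]$ before the Gaussian structure is known. This is harmless because Theorem \ref{thm:propagationchaos}(ii) guarantees the existence of a unique solution with a well-defined law, so $m_\beta$ is an a priori bounded, continuous, deterministic function that can legitimately be plugged into the variation-of-constants formula; alternatively, one can set up a Picard iteration on Gaussian processes (parametrized by the pair $(\mu,v)$) and observe that it preserves Gaussianity and converges to the unique solution, so the limit is itself Gaussian. Either way, once this point is addressed, the derivation of \eqref{eq:ODE} from \eqref{eq:MFE} is a direct computation.
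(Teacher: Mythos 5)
Your proposal is correct and rests on exactly the same key observation as the paper, namely that the McKean--Vlasov interaction enters only through the deterministic function $t\mapsto\Exp{S_{\beta}(\bar V_{\beta}(t))}$, so the equation is effectively a linear SDE whose variation-of-constants solution is an affine functional of the jointly Gaussian family $(\bar V^0,B^1,\dots,B^P)$; the paper packages this as the Picard map $\Phi$ preserving Gaussianity (the alternative you yourself mention), and your uniqueness-based framing is an equivalent repackaging. The derivation of \eqref{eq:ODE} by taking expectations of the SDE and applying It\^o's formula to $(\bar V_{\alpha}-\mu_{\alpha})^2$ is a cosmetic variant of the paper's direct computation of the mean and variance of the explicit integral formula.
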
	

\begin{proof}
	The unique solution of the mean field equations \eqref{eq:MFE} starting from a square integrable initial condition $\bar{V}_0$ measurable with respect to $\F$ can be written in the form:
		\begin{multline}\label{eq:SoluMFE}
		\review{\bar{V}}^{\alpha}_t=e^{-\frac{t}{\ta}} \review{\bar{V}^{\alpha}_0}+e^{-\frac{t}{\ta}}\Big(\int_0^t e^{\frac{s}{\ta}} (I_{\alpha}(s)+\sum_{\beta=1}^P J_{\alpha\beta}\Exp{S_{\beta}(\review{\bar{V}}^{\beta}_s)}) 
		ds\\+ \int_0^t e^{\frac{s}{\ta}}\lambda_{\alpha}(s)\, dB^{\alpha}_s\Big).
		\end{multline}
		\review{We observe that if  $\bar{V}_0$ is a Gaussian random variable, then the righthand side of~\eqref{eq:SoluMFE} is necessarily Gaussian as the sum of a scaled version of this random variable, a deterministic term and an It\^o integral of a deterministic function, and hence so is the solution of the mean field equation.}
	%
		Its law is hence characterized by its mean and covariance functions. The formula \eqref{eq:SoluMFE} involves the expectation $\Exp{S_{\beta}(\review{\bar{V}}^{\beta}(s))}$, which, because of the Gaussian nature of $X_{\beta}$, only depends on $\mu_{\beta}(s)$ and $v_{\beta}(s)$, and is denoted by $f_{\beta}(\mu_{\beta}(s),v_{\beta}(s))$. Taking the expectation of both sides of the equality \eqref{eq:SoluMFE}, we obtain the equation satisfied by the mean of the process $\mu_{\alpha}(t)=\Exp{\review{\bar{V}}^{\alpha}(t)}$:
	\[
	 \mu_{\alpha}(t)=e^{-\frac{t}{\tau_{\alpha}}}\left(\review{\mu^{\alpha}(0)}+\int_0^t e^{\frac{s}{\tau_{\alpha}}} \left(\sum_{\beta=1}^P J_{\alpha\beta}\Exp{S_{\beta}(\review{\bar{V}}^{\beta}_s}+I_{\alpha}(s) \right) ds\right).
	\]
	Taking the variance of both sides of the equality \eqref{eq:SoluMFE}, we obtain the following equation:
	\[
	v_{\alpha}(t)=	e^{-\frac{2t}{\ta}}\left(v_{\alpha}(0)+\int_0^t e^{\frac{2s}{\ta}}\lambda_{\alpha}^2(s)\, ds\right),
	\]
and this concludes the proof. 
\end{proof}

\begin{remark}
	\begin{itemize}
		\item In order to fully characterize the law of the process $\bar{V}$, we need to compute the covariance matrix function \review{$\textrm{Cov}(\bar{V}^{\alpha}_{t_1},\bar{V}^{\beta}_{t_2})$ for $t_1$ and $t_2$ in $\R^{+*}$. For $\alpha\neq \beta$ this covariance is clearly zero using equation~\eqref{eq:SoluMFE}, and we have:}
		\begin{equation}\label{eq:Covariance}
			\review{\textrm{Cov}(\bar{V}^{\alpha}_{t_1},\bar{V}^{\alpha}_{t_2}) = e^{-\frac{t_1+t_2}{\ta}}\left( \textrm{Var}(\bar{V}_{\alpha}^0)
			+ \int_0^{t_1\wedge t_2} e^{\frac{2s}{\ta}}\lambda_{\alpha}^2(s)\,ds\right)}
		\end{equation}
		for $t_1,t_2 \in \R^{+*}$, hence only depends on the parameters of the system and is in particular not coupled to the dynamics. The description of the solution given by equations \eqref{eq:ODE} is hence sufficient to fully characterize the solution of the mean field equations \eqref{eq:MFE}.
		\item The uniformity in time of the propagation of chaos has deep implications in regard of equations~\eqref{eq:ODE}. Indeed, we observe that the solution of the mean field equation is governed by the mean of the process, the expectation being a deterministic function depending on the parameters of the system. The uniformity in particular implies that, for $i$ in population $\alpha$:
		\begin{equation}\label{eq:UniformConvergenceMean}
			\sup_{t\geq 0} \left \vert \Exp{V^i_t} - \mu_{\alpha}(t) \right \vert \leq \sup_{t\geq 0}\Exp{\vert V^i_t-\bar{V}^i_t\vert} \leq \frac C {\sqrt{N}}
		\end{equation} 
		implying uniform convergence of the empirical mean, as a function of time, towards $\mu_{\alpha}(t)$.
		\item If \review{$\bar{V}_0$} is not Gaussian, the solution of equation \eqref{eq:MFE} asymptotically converges exponentially towards a Gaussian solution. The important uniformity convergence property towards the mean field equations ensures that the Gaussian solution is indeed the asymptotic regime of the network, which strengthens the interest of the analysis of the differential system \eqref{eq:ODE}.
	\end{itemize}
\end{remark}

The functions $f_{\beta}$ depend on the choice of the sigmoidal transform. A particularly interesting choice is the $\erf$ sigmoidal function\footnote{\review{We emphasize that in this article, the $\erf$ function is understood as the repartition function of a standard Gaussian random variable, i.e. $\erf(x) = \int_{-\infty}^x \frac{\exp\{-x^2/2\}}{\sqrt{2\pi}}$. In particular, this function is always positive. It is distinct from the usual definition of the $\erf$ function corresponding to $Erf(x)=\frac{2}{\sqrt{\pi}}\int_{0}^x e^{-x^2}$. This function is not always positive (hence not a good rate function) and is related to our function through the formula: $\erf(x)=\frac{1}{2} (Erf(\frac{x}{\sqrt{2}})+1)$. }}  $S_{\alpha}(x) = \erf(g_{\alpha}x+\gamma_{\alpha})$. In that case we are able to express the function $f_{\beta}$ in closed form, because of the following lemma:

\begin{lemma}\label{lemma:ErfSigmoids}
	In the case where the sigmoidal transforms are of the form $S_{\alpha}(x) = \erf(g_{\alpha}x+\gamma_{\alpha})$, the functions $f_{\alpha}(\mu_{\alpha},v_{\alpha})$ involved in the mean field equations \eqref{eq:ODE} with a Gaussian initial condition take the simple form:
	\begin{equation}\label{eq:M-Erf}
		f_\alpha(\mu,v) = \erf\left(\frac{g_{\alpha}\, \mu + \gamma_{\alpha}}{\sqrt{1+g_\alpha^{2}v}}\right).
	\end{equation}
\end{lemma}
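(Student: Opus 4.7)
The plan is to reduce $f_\alpha(\mu,v)=\Exp{S_\alpha(U)}$, with $U\sim\mathcal{N}(\mu,v)$, to a classical one-dimensional Gaussian-convolution identity. Writing $U=\mu+\sqrt{v}\,Z$ for $Z$ a standard normal variable, we need only to compute $\Exp{\erf(aZ+b)}$ with $a=g_\alpha\sqrt{v}$ and $b=g_\alpha\mu+\gamma_\alpha$; showing that this equals $\erf\!\bigl(b/\sqrt{1+a^2}\bigr)$ is equivalent to the statement of the lemma after substitution.

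The cleanest route is probabilistic. Since $\erf$ is an affine function of the standard normal cumulative distribution function $\Phi$, one introduces an auxiliary standard normal variable $W$ independent of $Z$ and uses the $\erf$/$\Phi$ relation to rewrite the expectation as a probability of the form $\Pro\!\bigl(W-\kappa aZ\le\kappa b\bigr)$, up to the affine constants produced by the conversion, where $\kappa$ is the $\sqrt{2}$ factor relating the two conventions. The random variable $W-\kappa aZ$ is centered Gaussian with variance $1+\kappa^2 a^{2}$, so the probability reduces to $\Phi\!\bigl(\kappa b/\sqrt{1+\kappa^2 a^{2}}\bigr)$; inverting the affine relation brings us back to $\erf\!\bigl(b/\sqrt{1+a^{2}}\bigr)$, which is the required identity.

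A direct analytic variant would substitute the integral representation $\erf(y)=\tfrac{2}{\sqrt{\pi}}\int_0^{y}e^{-t^{2}}dt$ into the Gaussian expectation, apply Fubini to exchange the order of integration, and complete the square inside the resulting double Gaussian integral. A third possibility is to differentiate both sides with respect to $b$: both share the same derivative and the same value at $b\to+\infty$ (or at $b=0$), which determines them.

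There is no substantive obstacle beyond careful bookkeeping of the $\sqrt{2}$ and $\sqrt{\pi}$ factors in the conversion between $\erf$ and $\Phi$. The essential content is the classical identity $\Exp{\Phi(aZ+b)}=\Phi\!\bigl(b/\sqrt{1+a^{2}}\bigr)$, simply rephrased for the $\erf$ convention adopted in the paper.
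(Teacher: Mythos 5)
Your proposal is correct and is essentially the paper's own argument in probabilistic dress: the paper writes $\Exp{S_\alpha(U)}$ as the integral of the standard bivariate Gaussian density over the half-plane $\{y\le ax+b\}$ and evaluates it by an explicit rotation of coordinates, which is exactly your computation of $\Pro(W-aZ\le b)$ via the law of the Gaussian variable $W-aZ$. Note only that the paper's ``$\erf$'' is in fact the standard normal cdf $\Phi$ (as it must be for $S_\alpha$ to range over $[0,1]$, and as the appendix's integral representation confirms), so the classical identity $\Exp{\Phi(aZ+b)}=\Phi\bigl(b/\sqrt{1+a^{2}}\bigr)$ applies verbatim with $\kappa=1$ and no conversion factors to track.
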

\begin{proof}
The proof is given in Appendix \ref{app:proof}.
\end{proof}

In summary, we have shown that, provided that the initial conditions of each neuron are independent and Gaussian, the large-scale behavior of our linear model is governed by a set of ordinary differential equations (theorem~\ref{thm:propagationchaos} and proposition \ref{pro:gaussianSolution}). This is very interesting since it reduces the study of the solutions to the very complex implicit equation \eqref{eq:MFE} bearing on the law of a process to a much simpler setting, ordinary differential equations. As shown below this allows us to understand the effects of the system parameters on the solutions. For this reason we assume from now on that the initial condition is Gaussian, and focus on the effect of the noise on the dynamics.

\section{Noise-induced phenomena}\label{sec:bifs}
In this section we mathematically and numerically study the influence of the noise levels $\lambda_{\alpha}$ on the dynamics of the neuronal populations. Thanks to the uniform convergence of the empirical mean towards the mean of the mean field system (equation~\eqref{eq:UniformConvergenceMean}) and the propagation of chaos property for the network process, it is relevant to study such phenomena through the thorough analysis of the solutions of the mean field equations given by the ODEs~\eqref{eq:ODE}. This is what we do in the present section.

As observed in equation~\eqref{eq:ODE}, in the case of a Gaussian initial condition, the  equation of the variance $v$ is decoupled from the equation on the mean $\mu$ in \eqref{eq:ODE}. The variance satisfies an autonomous equation:
\[\dot{v}_{\alpha}=-\frac{2}{\ta} v_{\alpha} + \lambda^2_{\alpha}(t).\]
which is easily integrated as:
\[v_{\alpha}(t)=e^{-\frac{2t}{\ta}} \Big( v_{\alpha}(0) + \int_0^t e^{\frac{2s}{\ta}}\lambda_{\alpha}^2(s)\,ds\Big). \]
$v_\alpha(t)$ is therefore  independent of the mean $\mu$. This implies that the equations on $\mu$ are a set of non-autonomous ordinary differential equations.

%
These ordinary differential equations are similar to those of a single neuron. They differ in that the terms in the sigmoidal functions depend on the external noise levels $\lambda_{\alpha}(t)$. They read:
\[\dot{\mu}_{\alpha}=-\frac {\mu_{\alpha}}{\ta}  + \sum_{\beta=1}^P J_{\alpha\beta}\erf\left(\frac{g_{\beta}\, \mu_{\beta} + \gamma_{\beta}}{\sqrt{1+g_\beta^{2}e^{-2t/\tau_{\beta}} (v_{\beta}(0) + \int_0^t e^{2s/\tau_{\beta}}\lambda_{\beta}^2(s)\,ds)}}\right)+I_{\alpha}\\
\]
hence the slope  $g_{\beta}$ and the threshold  $\gamma_{\beta}$ are scaled by a time-varying coefficient which is always smaller than one.

We now focus on the stationary solutions when the noise parameter $\lambda$ does not depend upon time. In that case, the variance is equal to:
\[v_\alpha(t)=\ta \lambda_{\alpha}^2/2 + e^{-\frac{2t}{\ta}}(v_{\alpha}(0)-\ta \lambda_{\alpha}^2/2),\] 
and converges exponentially fast towards the constant value $\ta\lambda_{\alpha}^2/2$. Asymptotic regimes of the mean field equations are therefore Gaussian random variables with constant standard deviation. Their mean is solution of the equation:
\[\dot{\mu}_{\alpha}=-\frac {\mu_{\alpha}}{\ta} + \sum_{\beta=1}^P J_{\alpha\beta}\erf\left(\frac{g_{\beta}\, \mu_{\beta} + \gamma_{\beta}}{\sqrt{1+g_\beta^{2}\tau_{\beta}\lambda_{\beta}^2/2}}\right)+I_{\alpha}\quad \alpha=1,\cdots,P\\
\]
In other words, the presence of noise has the effect of modifying the slope $g_{\alpha}$ and the threshold $\gamma_{\alpha}$ of the sigmoidal $\erf$ function, but the type of the equations is the same as that of the equation of each individual neuron or cortical column, it is a rate equation. 

We observe that the larger the noise amplitude $\lambda$, the smaller the slope of the sigmoidal transform. Noise has the effect of smoothing the sigmoidal transform. This will have a  strong influence on the bifurcations of the solutions to the mean field equations and hence on the behaviors of the system. We demonstrate these effects for two simple choices of parameters  in one- and two-population networks. 

\subsection{The external  noise can destroy a pitchfork bifurcation}\label{ssec:PitchDestroy}
Let us start by considering the case of a one-population network. We drop the index $\alpha$  since no confusion is possible. We assume for simplicity that the threshold $\gamma$ of the sigmoid is null and that the time constant $\tau$ is equal to one. By doing so, we do not restrict the generality of the study, since  $\tau$ can be eliminated by rescaling the time and $\gamma$ can be absorbed into $I$ by a simple change of origin for $\mu$. The network equations read:
\[dV^i_t=\left(-V^i_t + \frac{J}{N} \sum_{j=1}^N \erf\left(g\, V^j_t \right) + I(t)\right)\, dt + \lambda dB^i_t\quad i=1,\cdots,N,\]
and we are interested in the limit in law of their solutions as the number of neurons $N$ tends to infinity. 

In order to analytically study the effect of the parameter $\lambda$, we set $I \equiv -\frac J 2$. In that case, and  in the absence of noise, the solution $V=0$ is a fixed point of the network equations. The following proposition characterizes their solutions in the deterministic and stochastic cases.

\begin{proposition}\label{pro:CancelPitch}
	In a non-stochastic finite-size network, the null solution is:
	\begin{itemize}
		\item stable if $J<0$ or if $J>0$ and $g<g_c:=\sqrt{2\pi}/J$,
		\item unstable for $J>0$ and  $g>g_c$.
		\item For $J>0$, the system undergoes a pitchfork bifurcation at $g=g_c$.
	\end{itemize} 
	In the mean field limit of the same stochastic network, the pitchfork bifurcation occurs for a new value of $g=g^*=\frac{\sqrt{2\pi}}{\sqrt{J^2-\pi\lambda^2}} > g_c$ if $J>0$ and $\lambda<J/\sqrt{\pi}$. Furthermore the null solution is:
	\begin{itemize}
		\item stable if:
		\renewcommand{\labelitemii}{$\star$}
		\begin{itemize}
			\item $J<0$ or
			\item $J>0$ and $\lambda > J/\sqrt{\pi}$ (large noise case) or
			\item $J>0$, $\lambda<J/\sqrt{\pi}$ and $g<g^*$,
		\end{itemize}
		\item unstable for $J>0$, $\lambda<J/\sqrt{\pi}$ and $g>g^*$, and 
		\item the system undergoes a pitchfork bifurcation at $g=g^*$ when $J>0$ and $\lambda<J/\sqrt{\pi}$. 
	\end{itemize}
\end{proposition}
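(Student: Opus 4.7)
The plan is to reduce both halves of the statement to a linear stability analysis of a fixed point in a low-dimensional deterministic system, and then to invoke an odd $\mathbb{Z}_2$ symmetry to identify the bifurcation type.

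\emph{Deterministic finite-size network.} I would first check that $V=0$ really is an equilibrium: the closed form given by Lemma~\ref{lemma:ErfSigmoids} forces the convention $\erf(0)=1/2$ (otherwise the denominator $\sqrt{1+g^2 v}$ could not arise from a Gaussian expectation), so the drift at $V=0$ vanishes for $I=-J/2$. Linearising the noiseless version of \eqref{eq:Network} at $V=0$ yields the Jacobian $-\mathrm{Id}_N + \frac{Jg}{N\sqrt{2\pi}}\mathbf{1}\mathbf{1}^\top$, using $\erf'(0)=1/\sqrt{2\pi}$. Its spectrum is $\{-1\}$ with multiplicity $N-1$ together with the uniform-mode eigenvalue $-1+Jg/\sqrt{2\pi}$. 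Stability is thus equivalent to $Jg/\sqrt{2\pi}<1$: automatic for $J<0$, delimited by $g_c=\sqrt{2\pi}/J$ for $J>0$. The equivariance $(V^i)_i\mapsto(-V^i)_i$, coming from $\erf(-x)-1/2=-(\erf(x)-1/2)$, then forces the codimension-one bifurcation at $g_c$ to be of pitchfork type.

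\emph{Mean field.} For the stochastic case I would invoke Proposition~\ref{pro:gaussianSolution} together with Lemma~\ref{lemma:ErfSigmoids} to recast \eqref{eq:MFE} as the two-dimensional autonomous ODE
\[
\dot\mu = -\mu + J\,\erf\!\left(\frac{g\,\mu}{\sqrt{1+g^2 v}}\right)-\frac{J}{2},\qquad \dot v=-2v+\lambda^2.
\]
The $v$-equation is decoupled and has the globally attracting fixed point $v_\infty=\lambda^2/2$, so asymptotic stability of the zero-mean Gaussian stationary regime is controlled by the Jacobian of this 2D system at $(0,\lambda^2/2)$. That Jacobian is triangular with diagonal entries $-2$ and
\[
\Lambda \;:=\; -1+\frac{Jg}{\sqrt{2\pi}\,\sqrt{1+g^2\lambda^2/2}},
\]
so stability reduces to $\Lambda<0$ and the bifurcation occurs when $\Lambda$ crosses zero.

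\emph{Algebra, symmetry, and main obstacle.} Squaring $\Lambda=0$ gives $J^2 g^2=2\pi+\pi g^2\lambda^2$, i.e.\ $g^2(J^2-\pi\lambda^2)=2\pi$. A short case split yields the stated dichotomy: for $J<0$, or for $J>0$ with $\lambda\geq J/\sqrt{\pi}$, the left-hand side never reaches $2\pi$ for $g>0$, so the origin is stable for every $g$; for $J>0$ and $\lambda<J/\sqrt{\pi}$ stability holds iff $g<g^*=\sqrt{2\pi}/\sqrt{J^2-\pi\lambda^2}$, and $g^*>g_c$ is immediate from $\sqrt{J^2-\pi\lambda^2}<J$. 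The reduced scalar equation inherits the same $\mathbb{Z}_2$ symmetry $\mu\mapsto-\mu$, so the bifurcation at $g^*$ is again a pitchfork. The only conceptual point, rather than computational difficulty, is asserting that linear stability of $(0,\lambda^2/2)$ in the moment ODE~\eqref{eq:ODE} really captures (in)stability of the corresponding Gaussian stationary solution of the stochastic mean field equation~\eqref{eq:MFE}; but this is exactly what the Gaussian-preserving structure of Proposition~\ref{pro:gaussianSolution} and the closed moment dynamics guarantee.
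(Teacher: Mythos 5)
Your proposal is correct and follows essentially the same route as the paper: the rank-one Jacobian $-\mathrm{Id}_N+\frac{Jg}{N\sqrt{2\pi}}\mathbbm{1}_N$ with its single nontrivial eigenvalue for the finite deterministic network, substitution of the stationary variance $\lambda^2/2$ into the $\mu$-equation to get the shifted threshold $\Lambda=0$ and hence $g^*$, and the odd symmetry of the (centered) $\erf$ nonlinearity to identify the pitchfork. The only cosmetic difference is that you phrase the mean field step as a triangular $2\times 2$ Jacobian of the $(\mu,v)$ system while the paper directly analyzes the one-dimensional stationary $\mu$-equation; the paper additionally records that the cubic coefficient is nonzero, a nondegeneracy check worth keeping alongside your symmetry argument.
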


This proposition is a bit surprising at first sight. Indeed, it says that noise can stabilize a fixed point which is unstable in the same non-stochastic system. Even more interesting is the fact that if the system is driven by a sufficiently large noisy input, the zero solution will always stabilize. It is known, see, e.g.,~\cite{mao:08}, that noise can stabilize the fixed points of a deterministic system of dimension greater than or equal to $2$. The present observation extends these results to a one-dimensional case, in a more complicated setting because of the particular, non-standard, form of the mean field equations. Also note that this proposition provides a precise quantification of the value of the parameter that destabilizes the  fixed point. This is a stochastic bifurcation of the mean field equation (a P-bifurcation --P for phenomenological-- in the sense of~\cite{arnold:98}). This estimation will be used as a yardstick for the evaluation of the behavior of the solutions to the network equations in section \ref{sec:Network}. 

\begin{proof}
	We start by studying the finite-size deterministic system. In the absence of noise, it is obvious because of our assumptions that the solution $V^i=0$ for all $i\in\{1,\ldots,N\}$ is a fixed point of the network equations. At this point, the Jacobian matrix reads $-Id_N+\frac{J}{N}\,\frac{g}{\sqrt{2\pi}}\,\mathbbm{1}_N$, where $Id_N$ is the $N \times N$ identity matrix and $\mathbbm{1}_N$ is the $N\times N$ matrix with all elements equal to one. The matrix $\mathbbm{1}_N$ is diagonalizable, all its eigenvalues are equal to zero except one which is equal to $N$. Hence, all eigenvalues of the Jacobian matrix are equal to $-1$, except one which is equal to $\frac{J\,g}{\sqrt{2\pi}}-1$. The solution where all $V^i$ are equal to zero in the deterministic system is therefore stable if and only if $g\,J<\sqrt{2\pi}$. The eigenvalue corresponding to the destabilization  corresponds to the eigenvector $\overrightarrow{1}$ whose components are all equal to $1$. Interestingly, this vector does not depend on the parameters, and therefore it is easy to check that at the point $g=g_c$ the system loses stability through a pitchfork bifurcation. Indeed, because of the symmetry of the $\erf$ function, the second derivative of the vector field projected on this vector vanishes, while the third derivative does not (it is equal to $-(1+g^2)$). 

Considering now the stochastic mean field limit, the stationary mean firing rate in that case is solution of the equation:
\[\dot{\mu}=-\mu + J\erf\left(\frac{g\, \mu}{\sqrt{1+g^{2}\lambda^2/2}}\right)+I\]
	Here again, the null firing rate point $\mu=0$ is a fixed point of the mean field equations, and it is stable if and only if $-1+J\,\frac{g}{\sqrt{2\pi(1+g^{2}\lambda^2/2)}}<0$. The remaining of the proposition  readily follows from the fact that the stability changes at $g=g^*$ where $J\,\frac{g^*}{\sqrt{2\pi(1+{g^*}^{2}\lambda^2/2)}}=1$.
\end{proof}

Note that the results in this proposition only depend on $\lambda$ and its effect on the slope of the sigmoid. 
 It is a general phenomenon that goes beyond the example in this section: increasing $\lambda$  decreases the slope of the sigmoidal transform and the threshold.  In section~\ref{sec:Network} we will see that this phenomenon can be observed at the network level, and a good agreement will be found between the finite-size network behavior and the predictions obtained from the mean field limit. 

We now turn to an example in a two-dimensional network, where the presence of oscillations will be modulated by the noise levels.  

\subsection{The external noise can destroy oscillations}\label{ssec:NoiseOscill}
The same phenomenon of nonlinear interaction between the noise intensity and the sigmoid function can lead, in higher dimensions, to more complex phenomena such as the disappearance or appearance of oscillations. In order to study phenomena of this type, we instantiate a simple two-populations network model in which, similarly to the one-dimensional case, all the calculations can be performed analytically. The network we consider consists of an excitatory population, labeled $1$, and an inhibitory population, labeled $2$. Both populations are composed of the same number $N/2$ of neurons ($N$ is assumed in all the subsection to be even), and have the same parameters $\tau_{1}=\tau_2=\tau$, $g_{1}=g_2=g$ and $\lambda_{1}=\lambda_2=\lambda$. We choose for simplicity the following connectivity matrix:
\[
M=J\, \frac {2}{N} \left ( \begin{array}{cc}
	1 & -1\\
	1 & 1
\end{array}\right),
\]
and we assume that the inputs are set to $I_1=0$ and $I_2=-J$. The zero solution where all neurons have a zero voltage is a fixed point of the equations whatever the number of neurons $N$ in each population. We have the following result:

\begin{proposition}\label{pro:CyclesNoise}
	In the deterministic finite-size network, the null solution is:
	\begin{itemize}
		\item stable if $J<0$ or if $J>0$ and $g<g_c:=\sqrt{2\pi}/J$,
        \item unstable for $J>0$ and $g>g_c$ and the solutions are oscillating on a periodic orbit.
		\item For $J>0$ the system undergoes a supercritical Hopf bifurcation at $g=g_c$.
	\end{itemize} 
	In the mean field limit of  the same stochastic network, the Hopf bifurcation occurs for a new value of the slope parameter  $g=g^*=\frac{\sqrt{2\pi}}{\sqrt{J^2-\pi\lambda^2}} > g_c$. Furthermore the null solution is:
	\begin{itemize}
		\item stable if:
		\renewcommand{\labelitemii}{$\star$}
		\begin{itemize}
			\item $J<0$ or
			\item $J>0$ and $\lambda > J/\sqrt{\pi}$ (``large noise case'') or
			\item $J>0$, $\lambda<J/\sqrt{\pi}$ and $g<g^*$,
		\end{itemize}
		\item unstable for $J>0$, $\lambda<J/\sqrt{\pi}$ and $g>g^*$, and the system features a stable periodic orbit.
		\item The system undergoes a supercritical Hopf bifurcation at $g=g^*$ when $J>0$ and $\lambda<J/\sqrt{\pi}$. 
	\end{itemize}
\end{proposition}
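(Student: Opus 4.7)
The proof parallels that of Proposition \ref{pro:CancelPitch}, replacing the one-dimensional spectral analysis by a two-dimensional Hopf analysis, and then transferring the conclusion to the mean-field system via the effective-slope substitution induced by Lemma \ref{lemma:ErfSigmoids}.

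First I would verify that the origin is a fixed point of the $N$-neuron system (which holds because, with the chosen connectivity $M$ and inputs $I_1=0$, $I_2=-J$, the sigmoid values at $V^j=0$ contribute a zero balance to population $1$ and a balance $J$ to population $2$, exactly cancelling the constant inputs). Linearizing at the origin yields a Jacobian of the form $-\tau^{-1}\mathrm{Id}_N$ plus a rank-$2$ perturbation coming from the coupling: on each population block the rank-$1$ matrix $\mathbbm{1}_{N/2}$ appears, with weights $J_{\alpha\beta}\,S'_\beta(0)$. As in Proposition \ref{pro:CancelPitch}, all eigenvalues except two collapse to $-1/\tau$; the remaining ``collective'' eigenvalues are those of the reduced $2\times 2$ matrix
\begin{equation*}
A(g) \;=\; -\frac{1}{\tau}\mathrm{Id}_2 + \frac{g}{\sqrt{2\pi}}\,J\begin{pmatrix} 1 & -1 \\ 1 & \phantom{-}1 \end{pmatrix},
\end{equation*}
obtained by restricting to the symmetric eigenvector $\overrightarrow{1}_{N/2}$ within each population. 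Its trace is $-2/\tau+2gJ/\sqrt{2\pi}$ and its determinant is $(1/\tau-gJ/\sqrt{2\pi})^2+(gJ/\sqrt{2\pi})^2$, strictly positive for all $g$. The trace changes sign precisely at $g=g_c=\sqrt{2\pi}/(J\tau)$, at which point the two collective eigenvalues are the purely imaginary $\pm i/\tau$. This pins down linear stability on both sides of $g_c$ in the finite-$N$ deterministic network.

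Next I would verify the remaining ingredients of the Hopf bifurcation at $g=g_c$: the transversality condition $\der{\mathrm{Re}\,\lambda(g)}{g}\bigr|_{g_c}=J/\sqrt{2\pi}\neq 0$ is immediate from the trace formula, while the nondegeneracy of the first Lyapunov coefficient is dictated by the odd symmetry of $\erf$ at the origin. Indeed, oddness forces the quadratic term of the center-manifold reduction to vanish, and the cubic term is driven by $S'''(0)<0$; writing out the Poincar\'e normal form on the two-dimensional center subspace then yields a strictly negative first Lyapunov coefficient, hence a supercritical Hopf bifurcation and a stable periodic orbit born for $g>g_c$. This step is the computational crux of the proof; everything else is spectral bookkeeping.

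Finally, for the mean-field limit, Proposition \ref{pro:gaussianSolution} reduces the dynamics to the ODE system on $(\mu,v)$. The variance equation is decoupled and converges exponentially to $v_\alpha^\infty=\tau\lambda^2/2$, so asymptotically the mean satisfies an autonomous ODE of exactly the same form as the deterministic finite-$N$ one, except that $g$ is replaced by the effective slope $g_{\mathrm{eff}}(g,\lambda)=g/\sqrt{1+g^2\tau\lambda^2/2}$ (via Lemma \ref{lemma:ErfSigmoids}). The Hopf condition therefore becomes $g_{\mathrm{eff}}(g^*,\lambda)=g_c$; solving for $g^*$ gives $g^*=\sqrt{2\pi}/\sqrt{J^2-\pi\lambda^2}$, which is well defined precisely when $\lambda<J/\sqrt{\pi}$. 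When $\lambda\geq J/\sqrt{\pi}$, $g_{\mathrm{eff}}$ stays uniformly below $g_c$ for every $g>0$ and the null state remains linearly stable. Supercriticality is preserved under the substitution because the odd symmetry of $\erf$ persists and the sign of the cubic coefficient is unchanged. The main obstacle in the whole argument is the first Lyapunov coefficient computation; the remainder is spectral bookkeeping modeled directly on the proof of Proposition \ref{pro:CancelPitch}.
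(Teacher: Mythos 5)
Your proposal is correct and follows essentially the same route as the paper: a spectral reduction of the Jacobian at the origin to a single $2\times 2$ collective block (the paper phrases this via the Kronecker product $M\otimes\mathbbm{1}_{N/2}$, you via restriction to the symmetric eigenvector, which is the same computation), yielding the two eigenvalues $-1+\frac{gJ}{\sqrt{2\pi}}(1\pm\mathbf{i})$ and the Hopf condition $gJ=\sqrt{2\pi}$, followed by the transfer to the mean-field system through the noise-rescaled effective slope $g/\sqrt{1+g^2\tau\lambda^2/2}$. Your treatment of the transversality and first Lyapunov coefficient conditions is in fact slightly more explicit than the paper's, which simply defers to the argument of Proposition \ref{pro:CancelPitch}.
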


Note that proposition \ref{pro:CyclesNoise} is quite similar to proposition~\ref{pro:CancelPitch}, the qualitative difference being that the system is oscillating. The proof is closely related and is presented in less details.

\begin{proof}
In the deterministic network model, the Jacobian matrix at the null equilibrium can be written as 
\[A=-Id_{N} + \frac{g}{\sqrt{2\pi}}\,M \otimes \mathbbm{1}_{N/2}\] 
where $\otimes$ denotes the Kronecker product (see e.g.~\cite{neudecker:69,brewer:78}), i.e. the Jacobian matrix is built from  $N/2$ blocs of size $2\times2$ and each of these blocks is a copy of $\frac{g}{\sqrt{2\pi}}\,M$. The eigenvalues of a Kronecker product of two matrices are all possible pairwise products of the eigenvalues of the matrices. Since the eigenvalues of $M$ are equal to $\frac{2\,J}{N}\,(1\pm \mathbf{i})$  where $\mathbf{i}^2=-1$, and as noted previously, the eigenvalues of $\mathbbm{1}_{N/2}$ are $0$ with multiplicity $N/2-1$ and $N/2$ with multiplicity $1$, we conclude that the Jacobian matrix $A$ has $N-2$ eigenvalues equal to $-1$, and two eigenvalues  equal to $-1+g\,J/\sqrt{2\pi}(1\pm \mathbf{i})$. The null equilibrium in this deterministic system is therefore stable if and only if the real parts of all eigenvalues are smaller than $0$, viz. $gJ<\sqrt{2\pi}$. Therefore, for a fixed $J$, the system has a bifurcation at $g_c=\sqrt{2\pi}/J$. The analysis of the eigenvectors allows to check the genericity and transversality conditions of the Hopf bifurcation (see e.g.~\cite{guckenheimer-holmes:83}) in a very similar fashion to the proof of proposition~\ref{pro:CancelPitch}. 

In the mean field model, the same analysis applies and, as in the one-dimensional case, the bifurcation point is shifted to $g^*$ when this value is well-defined, which concludes the proof of the proposition~\ref{pro:CyclesNoise}. 
\end{proof}

We have therefore shown that noise can destroy the oscillations of the network. The results of propositions \ref{pro:CancelPitch} and \ref{pro:CyclesNoise} are summarized in figure \ref{fig:pitchfork-hopf}.
\begin{figure}[!h]
\centerline{
\includegraphics[width=0.75\textwidth]{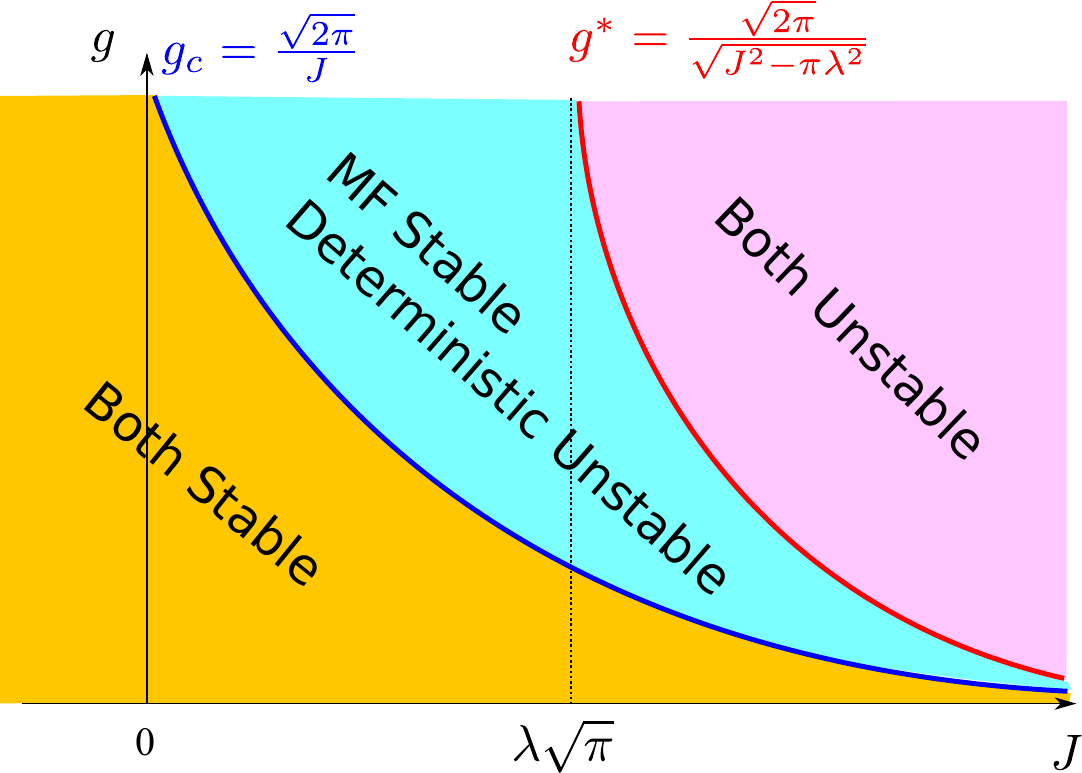}
}
\caption{Summary of the results in propositions \ref{pro:CancelPitch} and \ref{pro:CyclesNoise}, see text. The additive noise parameter $\lambda$ smoothly modifies the pitchfork or the Hopf bifurcation curve in the  $(g,J)$ plane. For $\lambda$ large enough, the null solution of the mean field equation is always stabilized whatever $g$.MF: mean field limit, Deterministic: finite-size deterministic network. The blue (respectively red) curve is one branch of the hyperbola of equation $gJ=\sqrt{2\pi}$ (respectively $g\sqrt{J^2-\pi \lambda^2}=\sqrt{2\pi}$).}
\label{fig:pitchfork-hopf}
\end{figure}

An even more interesting phenomenon is that noise can also produce regular cycles in the mean part of the solution of the mean field equations, for parameters such that the deterministic system presents a stable equilibrium. This is the subject of the following section.

\subsection{The external noise can induce oscillations}\label{sec:TwoPopsLambda}
\review{In order to uncover further effects of the noise on the dynamics, we now turn to the numerical study of a two-populations network including excitation and inhibition. The time constant $\tau$, sigmoidal transforms $S$, noise intensity $\lambda$ and the initial condition on the variance are chosen identical for both population. Under these hypotheses, the variances of the two populations are identical and denoted by $v(t)$. We further assume that $S(x)=\erf(g\,x)$, and hence the mean field nonlinear function $f(\mu,v)$ is given by lemma~\ref{lemma:ErfSigmoids}. The connectivity matrix is set to $J=\frac{1}{N}\left ( \begin{array}{cc}
		15 & -12\\
		16 & -5
	\end{array}
	\right)$.
%
The input currents $I_1$ and $I_2$ are considered constant. The mean field equations in that case read:
\begin{equation*}
\begin{cases}
	\dot{\mu}_{1}=-\frac{\mu_{1}}{\tau} +  J_{11}f(\mu_{1},v)+J_{12}f(\mu_{2},v)+I_{1}\\
        \dot{\mu}_{2}=-\frac{\mu_{2}}{\tau} +J_{21}f(\mu_{1},v)+J_{22}f(\mu_{2},v)+I_{2}\\
	\dot{v}=-2\frac{v}{\tau} +\lambda ^2
\end{cases}
\end{equation*}
}


\review{The codimension two bifurcation diagram of the system for $I_2=-3$ is displayed in Figure~\ref{fig:Codim2Lambda} (qualitative results turn out to change smoothly when $I_2$ is also allowed to vary). It features two cusps (CP) and one Bogdanov-Takens  (BT) bifurcations. In addition to these local bifurcations, we observe that the Hopf bifurcation manifold (shown in pink in Figure~\ref{fig:Codim2Lambda}) and the saddle-homoclinic bifurcation curve (green line) present a turning point, i.e. change monotony as a function of $\lambda$. }

\begin{figure}[htbp]
	\centering
		\includegraphics[width=\textwidth]{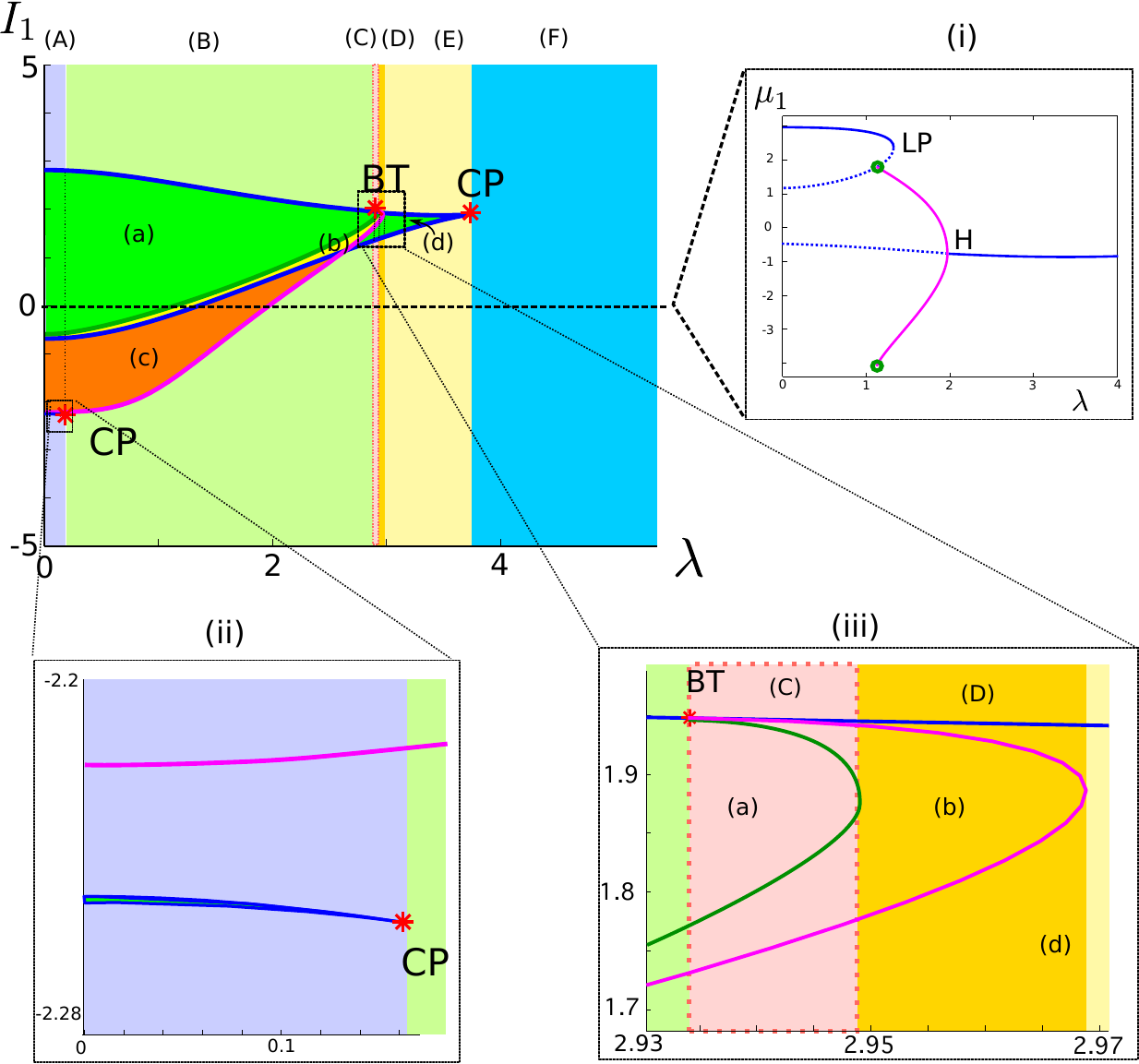}
	\caption{\review{Codimension two bifurcation diagram (upper left) and zooms (subfigures (ii) and (iii)) for the mean field equations as $I_1$ and $\lambda$ are varied. We distinguish, apart from the trivial regime with a single fixed point, three dynamical regimes labeled (a), (b) and (c) (see text) and $6$ ranges of $\lambda$, labeled (A) through (F). Blue: saddle-node bifurcations, pink: Hopf bifurcations, green: saddle homoclinic bifurcations, BT: Bogdanov Takens bifurcation, CP: cusp. Individual behaviors in each zone are summarized in appendix~\ref{sec:BifLambda}. (i): Codimension 1 bifurcation diagram for $I_1=0$ as a function of $\lambda$: saddle-node (LP), a Hopf (H) and a saddle homoclinic bifurcation (green circles). We observe three main different noise regimes: a high-state equilibrium regime, a periodic regime and a low-state equilibrium regime. A small interval of values of $\lambda$ corresponds to the co-existence of cycles and a fixed point close to the saddle-homoclinic orbit. Diagrams obtained with XPPAut~\cite{ermentrout:02} and MatCont package~\cite{dhooge-govaerts-etal:03,dhooge-govaerts-etal:03b}. }}
	\label{fig:Codim2Lambda}
\end{figure}

\review{The diagram can be decomposed into $4$ different regions depending on the dynamical features (number and stability of fixed points or cycles): the ``trivial'' zone where the system features a unique stable fixed point (not colored), a zone with 2 unstable and 1 stable fixed point (green zone (a)) separated by the saddle-homoclinic bifurcation curve from region (b) (yellow) where an additional stable cycle exists. Zone (c) (orange) features a stable cycle and an unstable fixed point, and zone (d) (green) features 2 stable and 1 unstable fixed points. }

\review{Let us for instance fix $I_1=0$. As $\lambda$ is increased, several noise-induced transitions occur leading the system successively in zone (a), (b), (c) and the trivial zone (see codimension one bifurcation diagram in Figure~\ref{fig:Codim2Lambda} (i)). In details, for small noise levels the system features a unique stable fixed point (zone (a)). A family of large amplitude and small frequency periodic orbits appears from the saddle-homoclinic bifurcation yielding a bistable regime (zone (b)) before the stable fixed point disappears through a saddle-node bifurcation (zone (c)). The amplitude of these cycles progressively decreases and their frequency progressively increases as the noise intensity is increased, and they eventually disappear through a supercritical Hopf bifurcation leading to the trivial behavior with a single fixed point. We emphasize here the fact that the sudden appearance of large amplitude slow oscillations can be compared to epileptic spikes, which are characterized by the presence of collective oscillations of large amplitude and small frequency suddenly appearing in a population of neurons (see~\cite{touboul-faugeras-wendling:10}). This comparison turns out to be relevant from the microscopic viewpoint: network simulations of section~\ref{sec:Network} will indeed show a sudden synchronization of all neurons at this transition. }

\review{The diagram can also be decomposed into six different noise levels intervals (labels (A) through (F) in Figure~\ref{fig:Codim2Lambda}) corresponding to qualitatively different codimension 1 bifurcation diagrams as $I_1$ is varied (the six corresponding bifurcation diagrams are presented in appendix~\ref{sec:BifLambda}). The presence of these different zones illustrate how the noise influences the response of the neural assembly to external inputs. For instance, for  $\lambda$ large enough, no cycles exist whatever $I_1$ (zones (E-F)), whereas for $\lambda$ small enough (zones A-D), cycles always exist for some values of the input. Such partitions may provide an experimental design for evaluating a noise level range as a function of the observed dynamics when varying the input to the excitatory population for instance.}

\section{Back to the network dynamics}\label{sec:Network}
Thus far, we studied the dynamics of the mean field equations representing regimes of the network dynamics in the limit where the number of neurons is infinite. We now compare the regimes identified in this analysis with simulations of the finite-size stochastic network. We are particularly looking for potential finite-size effects, namely qualitative differences between the solutions to the network and the mean field equations. This will provide us with information about the accuracy of an approximation of the network dynamics by the mean field model, as function of the size of the network. 

\subsection{Numerical Simulations}
Numerical simulations of the network stochastic differential equations~\eqref{eq:Network} are performed using the usual Euler-Maruyama algorithm (see e.g. \cite{maruyama:55,mao:08}) with fixed time step (less than $0.01$) over an interval $[0,T]$. In order to observe oscillations, we choose $T$ between $50$ and $70$. The simulations are performed with Matlab\textregistered, using a vectorized implementation that has the advantage to be very efficient even for large networks. The computation time stays below $1$s for networks up to $2\,000$ neurons, and appears to grow linearly with the size of the network once the cache memory saturates (see Figure~\ref{fig:TempsCalcul}). For instance, for $T=20$, $dt=0.01$, the simulation of a $2\,000$ neurons network takes $0.89$s, and for $525\,000$ neurons, $600$s on a HP Z800 with 8 Intel Xeon CPU E5520 @ 2.27 GHz 17.4 Go RAM. The main limitation preventing the simulation of very large networks is the amount of memory required for the storage of the trajectories of all neurons.
\begin{figure}
	\centering
		\includegraphics[width=.4\textwidth]{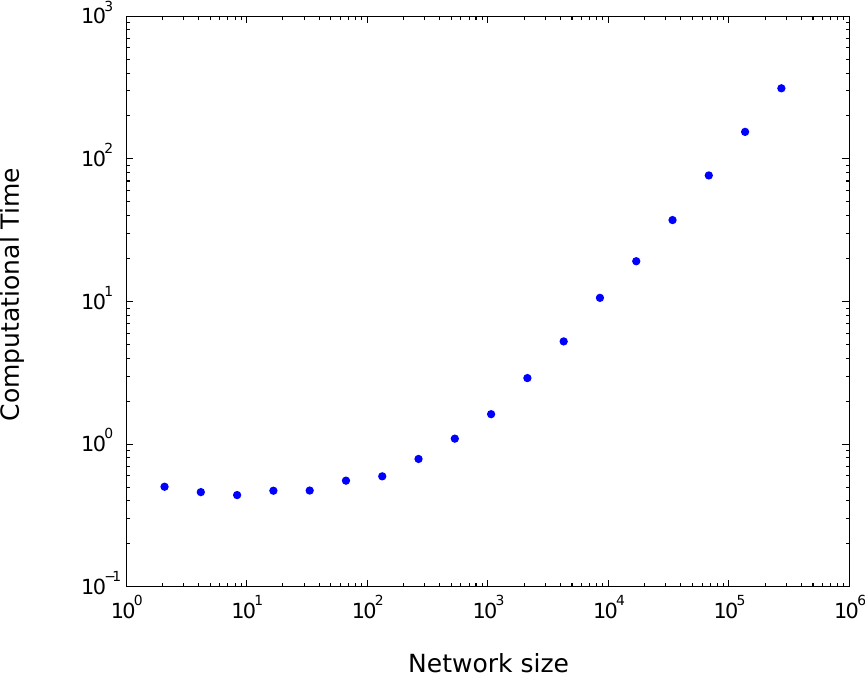}
	\caption{Computation time for the simulation of the stochastic network in logarithmic scale as a function of the network size.}
	\label{fig:TempsCalcul}
\end{figure}

An important property arising from theorem~\ref{thm:propagationchaos} is that asymptotically, neurons behave independently and have the same probability distribution. In our numerical simulations, we will make use of this asymptotic independence and, in order to evaluate an empirical mean of the process related to a given neuron in population $\alpha$, will compute both the empirical mean over all neurons in that population and a mean over different independent realization of the process. This method allows to reduce sensitively the number of independent simulations in order to obtain a given precision in the empirical mean evaluation.

\subsection{A one population case}
We start by addressing the case discussed in section~\ref{ssec:PitchDestroy} where we showed analytically that the loss of stability of the null fixed point as the slope of the sigmoid was varied depended on the noise parameter $\lambda$. We now investigate numerically the stability of the $0$ fixed point of the network equations. In order to check for the presence of a pitchfork bifurcation, we compute, for each value of the noise and for each value of the slope of the sigmoid, an estimated value of the mean of the membrane potential. This estimate is calculated by averaging out over $500$ independent realizations the empirical mean of the membrane potentials of all neurons in the network at the final time. We display the average value then compare these simulations with those of the mean field equations stopped at the same time as the network. We observe that both are very similar and show some differences with the bifurcation diagram that corresponds to the asymptotic regimes.

The results of the simulations, where we have also varied $N$, are shown in Figure~\ref{fig:bifdelayed} and reveal two interesting features. First, because we simulate over a finite time, we tend to smooth the pitchfork bifurcation: this is perceptible for both the network and the mean field equations. Second, we observe that the loss of stability of the zero fixed point arises at the value of $\lambda$ predicted by the analysis of the mean field equations for networks as small as $50$ neurons. The value reached by the simulations of the network is very close to that related to the mean field equation as soon as $N$ becomes greater than $250$. 

\begin{figure}[htbp]
	\begin{center}
		\subfigure[Network Simulations vs mean field simulations, different $\lambda$, $N=50\,000$]{\includegraphics[width=.45\textwidth]{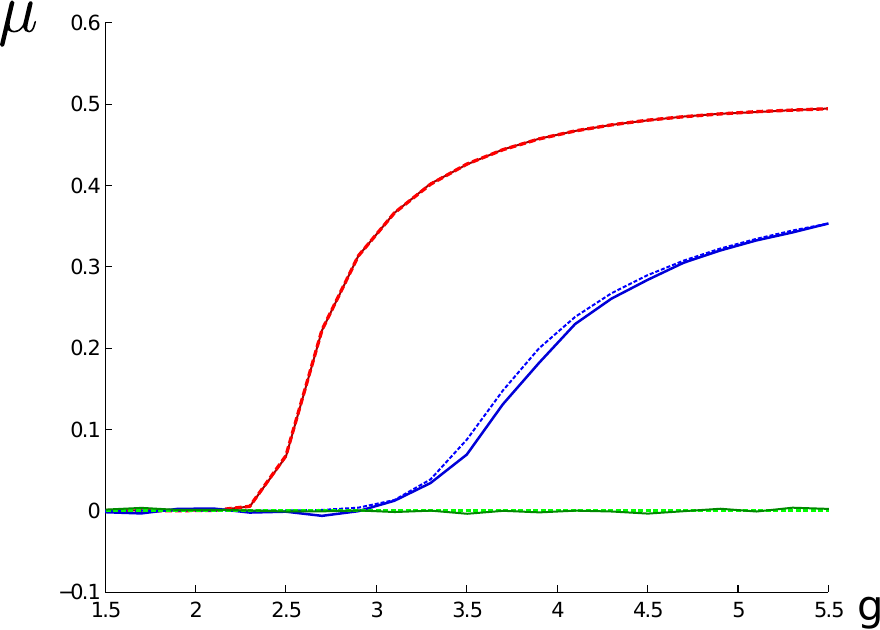}}\quad
		\subfigure[Network simulations vs mean field simulations, different $N$, $\lambda=0.4$]{\includegraphics[width=.45\textwidth]{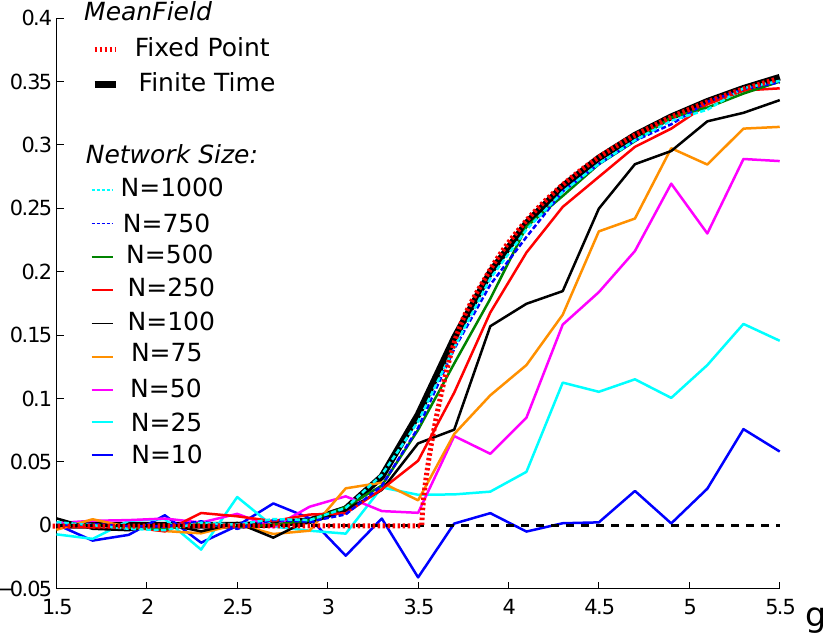}}
	\end{center}
	\caption{Comparison of the pitchfork bifurcations with respect to the slope parameter $g$ for the network and the mean field equations, $T=40$, $dt=0.001$, number of sample paths: $100$, initial condition $V^0=0.5$ (hence we only see the positive part of the pitchfork, symmetrical solutions are found for negative initial conditions, and are not plotted for legibility). (a): $50\,000$ neurons. Continuous curves correspond to network simulations, dashed curves to mean field simulations.  When $\lambda$ increases, as predicted by proposition~\ref{pro:CancelPitch}, we observe that the value of the parameter $g$ related to the pitchfork bifurcation increases as well, until the pitchfork  disappears: red: $\lambda=0$, blue: $\lambda=0.4$, green: $\lambda=0.8>\sqrt{2\pi}/J\sim 0.56$. (b): $\lambda=0.4$. The solution to the  mean field equation undergoes a pitchfork bifurcation at $g=3.55$. Large dotted red: theoretical pitchfork bifurcation. Large black: endpoint of mean field simulation at time $T=40$. The other colored curves show the results of the network simulation for different values of the size of the network $N$. The $0$ solution, which loses stability, is displayed in thin dashed black. We see that as $N$ increases, the mean field equation describes accurately the network activity. For $N\geq 50$ (red, green, dotted blue and dotted cyan curves) the bifurcation diagram is quite close to the one predicted by the mean field analysis.}
	\label{fig:bifdelayed}	
\end{figure}

\subsection{Two populations case and oscillations}
We now investigate the case shown in Figure~\ref{fig:Codim2Lambda} (i) where cycles are created (through homoclinic bifurcation) or destroyed (through Hopf bifurcation) as the additive noise intensity parameter $\lambda$ is increased. 

Looking at  Figure~\ref{fig:Codim2Lambda}(i), we observe that for $\lambda \in [1.12, 1.33]$, stable periodic orbits coexist with stable fixed points in the mean field system. For smaller values of $\lambda$, the mean field system features a unique stable fixed point, while for $\lambda \in [1.33, 1.97]$, it features a unique stable limit cycle, and for $\lambda>1.97$, the dynamics is reduced to a unique attractive fixed point. Numerical simulations confirm this analysis. Let us for instance illustrate the fact that the network features the bistable regime, the most complex phenomenon. Figure~\ref{fig:Bistable} shows simulations of a network composed of $5\,000$ neurons in each population (time step $dt=5\cdot 10^{-3}$, total time $T=50$). Depending on the mean and on the standard deviation of the initial condition, we observe that the network either converges to the mean field fixed point or the periodic orbit.  Both mean field equations show very close behaviors.  
\begin{figure}[htbp]
	\centering
		\subfigure[$\lambda=1.2$. \textbf{Oscillatory regime}. Statistics of the network compared to the mean field.]{\includegraphics[width=.45\textwidth]{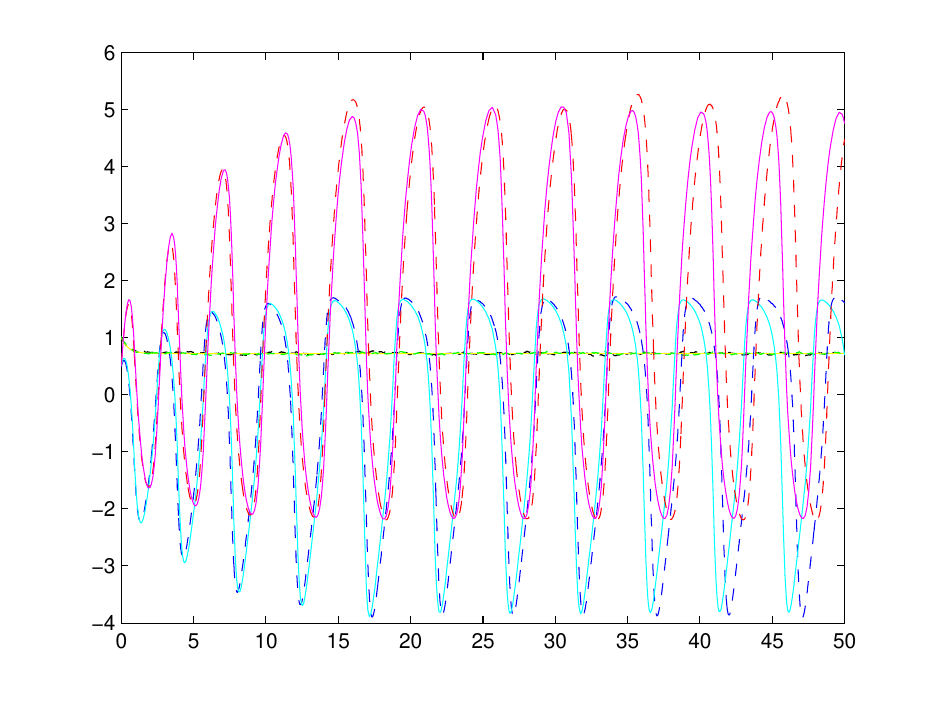}}\quad 
		\subfigure[$\lambda=1.2$. \textbf{Fixed-point regime} Statistics of the network compared to the mean field.]{\includegraphics[width=.45\textwidth]{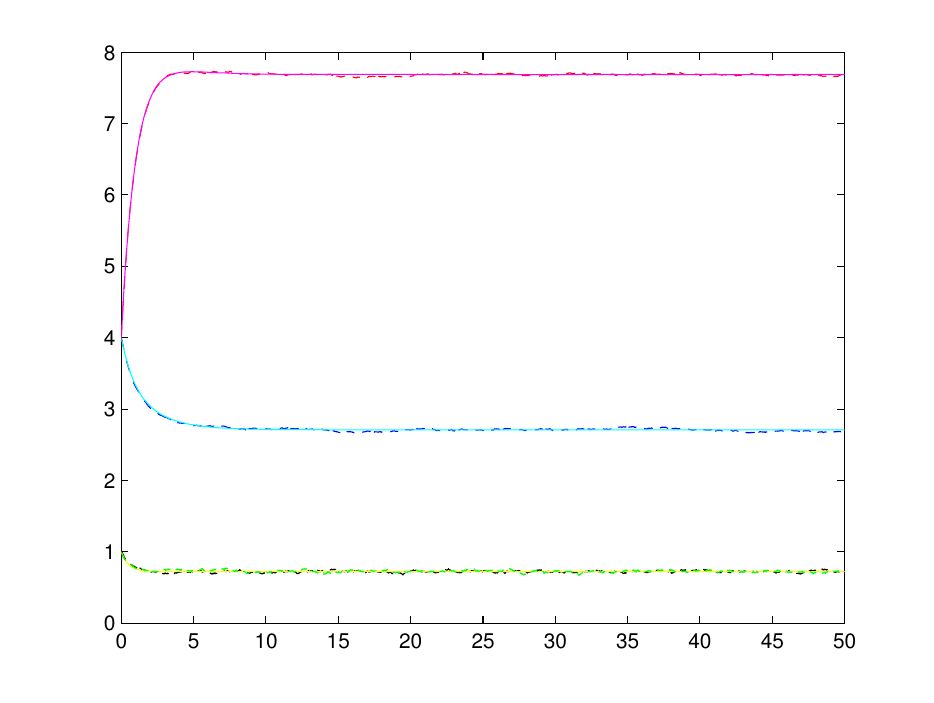}}               
                \caption{Featuring bistability. In both cases $\lambda=1.2$. The initial conditions for the mean field equation are chosen in agreement with the initial conditions of the network. The initial value of the membrane potential of each individual neuron in the network is drawn independently from a Gaussian distribution of variance 1 whose mean  depends on the population: (a)  mean $0.5$. (b)  mean $4$. Cyan (resp. magenta) curves: value of the mean variable of the mean field solution for population $1$ (resp. 2). Dashed blue (resp. red) curves: empirical mean of population $1$ (resp. $2$). Yellow: value of the variance of the mean field solution. Dashed black (resp. green): empirical variance of population $1$ (resp. $2$).}
          \label{fig:Bistable}
\end{figure}
\\
In the fixed point regime corresponding to small values of $\lambda$ we observe that the membrane potential of every neuron randomly varies around the value corresponding to the fixed points of the mean field equation (see Figure~\ref{fig:NetworkLambda}, cases (a) and (b))), with a standard deviation that converges toward the constant value $\lambda^2/2$ as predicted by the mean field equations. The empirical mean and standard deviation of the voltages in the network show a very good agreement with the related mean field variables. For larger values of $\lambda$ corresponding to the oscillatory regime (Figure~\ref{fig:NetworkLambda}, cases (c) and (d)), all neurons oscillate in phase. These synchronized oscillations yield a coherent global oscillation of the network activity. The statistics of the network are again in good agreement with the mean field solution. The standard deviation converges towards the constant solution of the mean field equation. This is visible at the level of individual trajectories, that shape a ``tube'' of solutions around the periodic mean field solution, whose size increases with $\lambda$. The empirical means accurately match the regular oscillations of the solution of the mean field equation. A progressive phase shift is observed, likely to be related with the time step $dt$ involved in the simulation. Note that the phase does not depend on the realization. Indeed, according to theorem~\ref{thm:propagationchaos}, the solution of the mean field equations only depends on the mean and the standard deviation of the Gaussian initial condition, which therefore governs the phase of the oscillations on the limit cycle (see Figure~\ref{fig:phase}).

In the fixed point regime related to large values of $\lambda$, very noisy trajectories are obtained because of the levels of noise involved (see Figure~\ref{fig:NetworkLambda}, cases (e) and (f)). Though the individual neurons show very fluctuating trajectories, the empirical mean averaged out over all neurons in the network fits closely the mean field fixed point solution.

\begin{figure}[htbp]
	\centering
		\subfigure[$\lambda=0.6$. \textbf{Fixed-point regime}. Individual trajectories vs mean field.]{\includegraphics[width=.45\textwidth]{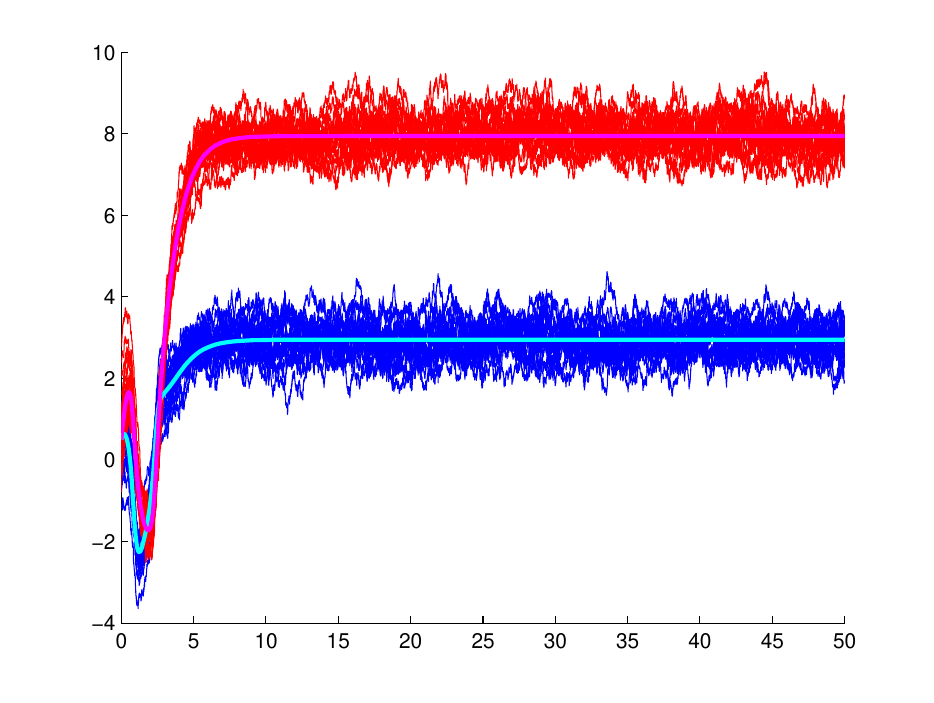}}\quad 
		\subfigure[$\lambda=0.6$. \textbf{Fixed-point regime} Empirical network statistics vs mean field.]{\includegraphics[width=.45\textwidth]{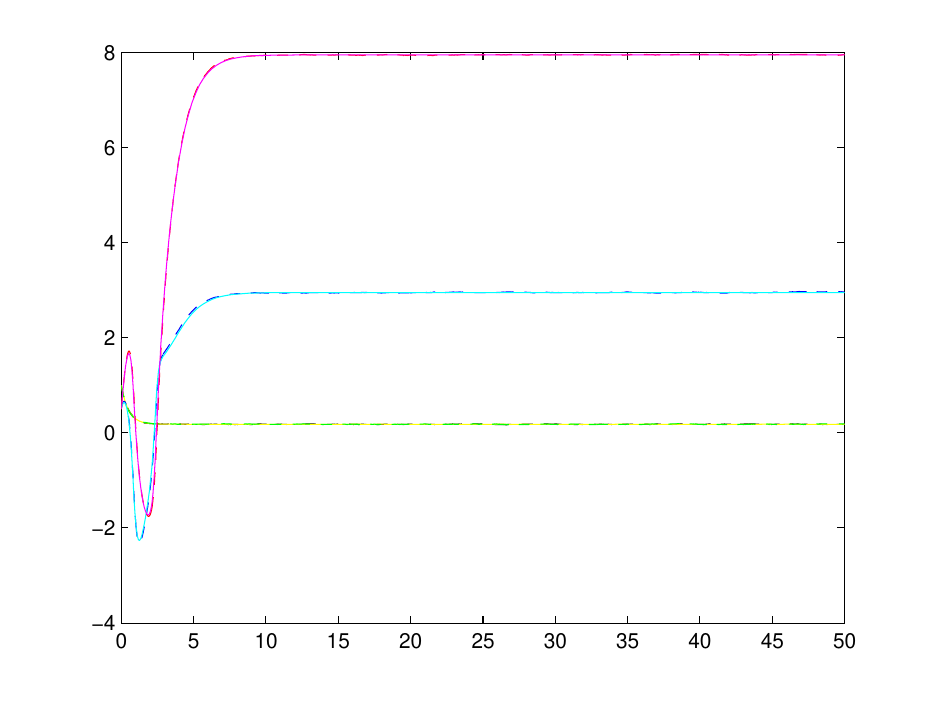}}
                \subfigure[$\lambda=1.2$. \textbf{Oscillatory regime}. Individual trajectories vs mean field.]{\includegraphics[width=.45\textwidth]{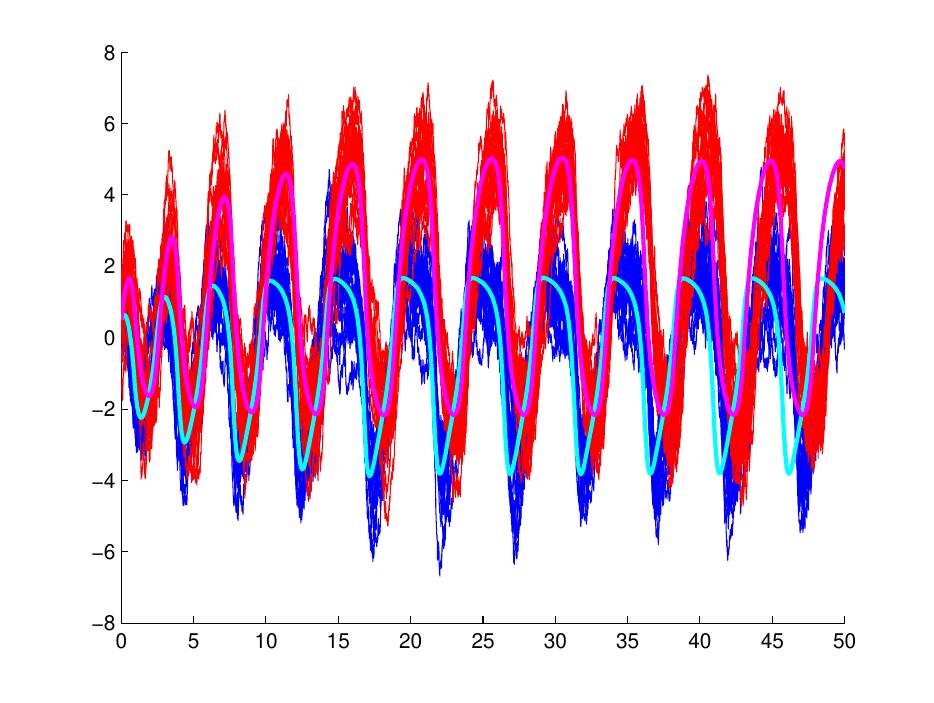}}\quad 
		\subfigure[$\lambda=1.2$. \textbf{Oscillatory regime} Empirical network statistics vs mean field.]{\includegraphics[width=.45\textwidth]{statlambda1_2}}
                \subfigure[$\lambda=2.5$. \textbf{Noisy fixed point regime}. Individual trajectories vs mean field.]{\includegraphics[width=.45\textwidth]{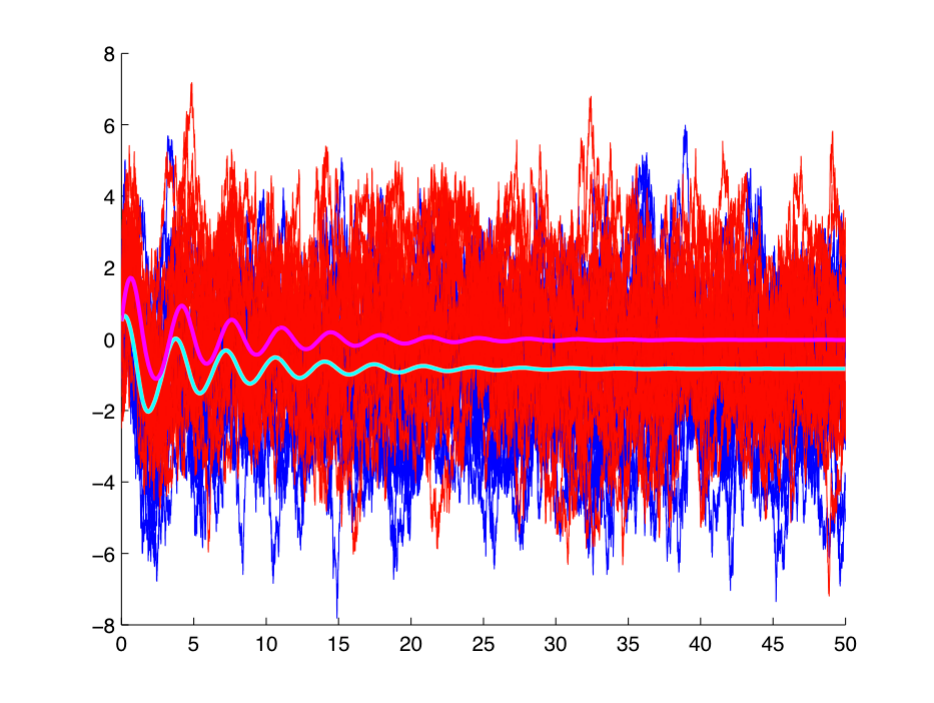}}\quad 
		\subfigure[$\lambda=2.5$. \textbf{Noisy fixed point regime} Empirical network statistics vs mean field.]{\includegraphics[width=.45\textwidth]{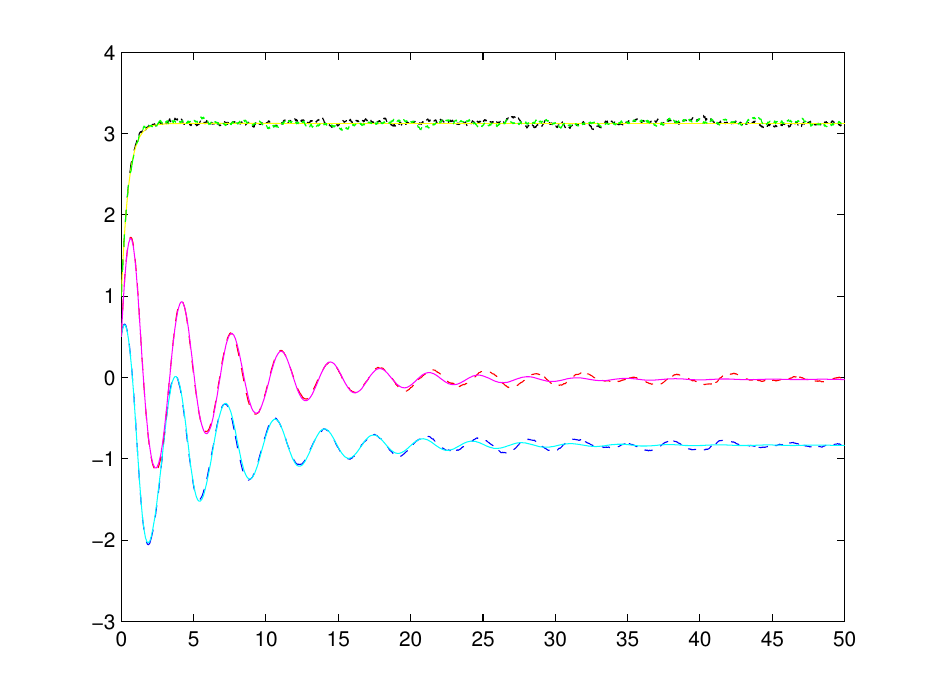}}
                \caption{Solution of the network dynamics for different values of the noise parameter $\lambda$ compared to the mean field solution. Simulations are run for $10\,000$ neurons, $5\,000$ in each population. (a), (c), (e): $40$ individual trajectories of the membrane potentials of $40$ neurons arbitrarily chosen in the network ($20$ in each population) compared to the solution of the mean field equations. Blue: population $1$ (excitatory). Red: population $2$ (inhibitory). Cyan (resp. magenta): mean of the mean field solution for population $1$ (resp. 2). (b), (d), (f): Empirical statistics of the network compared to the mean field. Cyan (resp. magenta): mean of the mean field solution for population $1$ (resp. 2). Yellow: variance of the mean field solution. Dashed blue (resp. red): empirical mean of population $1$ (resp. $2$). Dashed black (resp. green): empirical variance of population $1$ (resp. $2$). \review{For $\lambda=2.5$, due to the amplitude of noise, the statistics were averaged over $10$ realizations of the process.}}
          \label{fig:NetworkLambda}
\end{figure}

\begin{figure}[htbp]
	\centering
                 \includegraphics[width=0.60\textwidth]{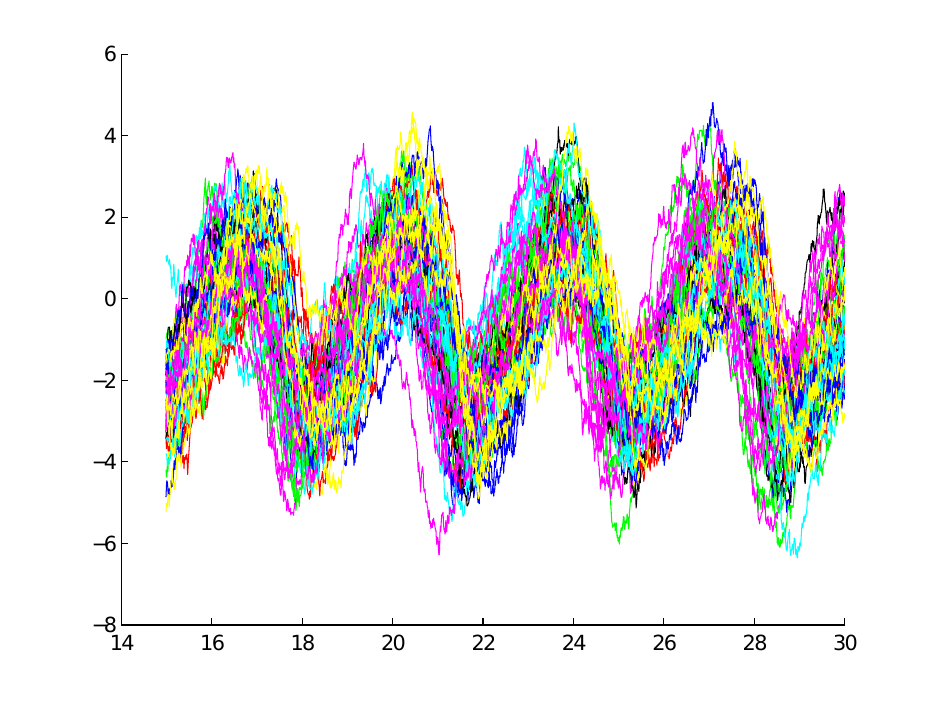}
	\caption{Different realizations of the stochastic network dynamics: the membrane potentials of $5$ neurons among $5\,000$ of population 1 are plotted for $12$ different realizations represented in different colors. All neurons oscillate in phase, and this phase does not depend on the realization.}
 \label{fig:phase}
\end{figure}

Eventually, we study the switching between a fixed-point regime and an oscillatory regime by extensively simulating the $10\,000$ neurons network for different values of $\lambda$ and computing the Fourier transform of the empirical mean (see Figure~\ref{fig:Lambda3d}). The three-dimensional plots show that the appearance and disappearance of oscillations occur for the same values of the parameter $\lambda$ as in the mean field limit, and the route to oscillations is similar: at the homoclinic bifurcation in the mean field system, arbitrarily small frequencies are present, this is also the case for the finite-size network. At the value of $\lambda$ related to the Hopf bifurcation, the system suddenly switches from a non-zero frequency to a zero frequency in a form that is very similar to the network case. Therefore we conclude that the mean field equations accurately reproduce the network dynamics for networks as small as $10\,000$ neurons, and hence provide a good model, simple to study, for networks of the scale of typical cortical columns. As a side remark, we note that at a homoclinic bifurcation of the mean field system, very small frequencies appear and a precise description of the spectrum of the network activity would require very large simulation times to uncover precisely the spectrum at this point, even more so since the large standard deviation of the process disturbs the synchronization. In a forthcoming study focusing on mean field equations arising in a similar system including dynamically fluctuating synaptic coefficients, an interesting additional phenomenon will appear: in that case, the standard deviation variable will be coupled to the mean, and this coupling will result in sharpening the synchronization.
\begin{figure}[htbp]
	\centering
                \includegraphics[width=\textwidth]{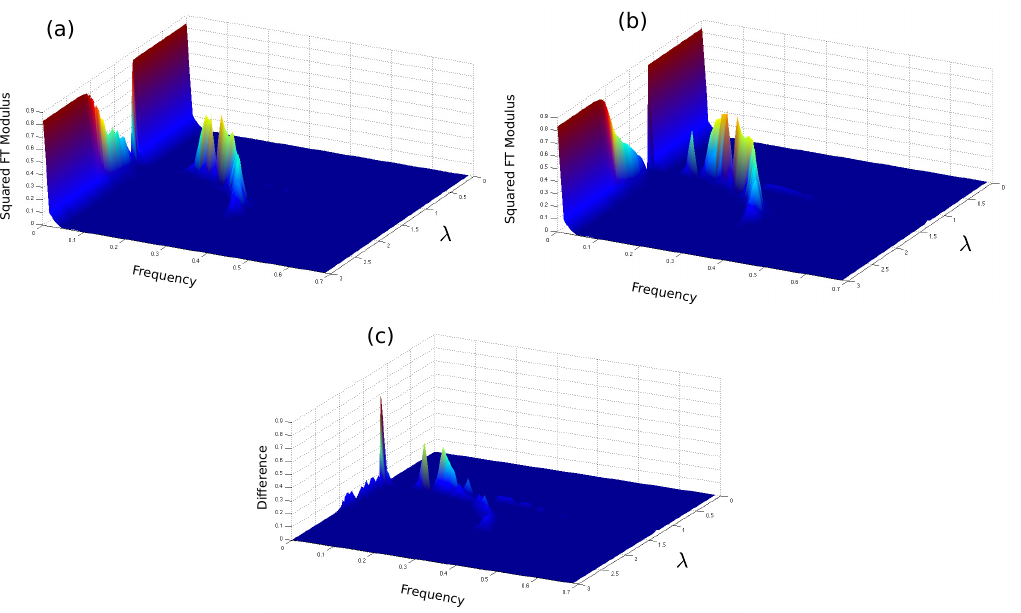}
	\caption{Squared moduli of the Fourier transforms of (a) the empirical mean for simulations of the network  and (b) of the mean variable of the solution to the mean field equations as functions of the frequency (Hz) and the noise parameter $\lambda$. We observe that oscillations  appear in the network for the same value of $\lambda$ as in the mean field equations (Figure~\ref{fig:Codim2Lambda}), first through what appears to be a  homoclinic bifurcation (arbitrary small frequencies) and also disappear for the same value of  $\lambda$ through what seems to be a Hopf bifurcation (discontinuity in the power spectrum). (c) Magnitude of the difference between the two diagrams: we note that the frequency distribution reaches its maxima for these same values of $\lambda$, and the main differences are observed, as expected, around the putative homoclinic bifurcation point.}
	\label{fig:Lambda3d}
\end{figure}

\medskip

We conclude this section by discussing heuristic arguments explaining the observed regular oscillations. Let us start by stating that this phenomenon is a pure collective effect: indeed, two-neurons networks (one per population) do not present such regular oscillations as noise is varied. We observe that individual trajectories of the membrane potential of a 2-neurons networks for small noise levels stay close to the deterministic fixed point. However, when noise is increased, the system starts making large excursions with a typical shape resembling the cycle observed in the mean field limit, and these excursions occur randomly. Such excursions are typical of the presence of a homoclinic deterministic trajectory: when perturbed, the system catches the homoclinic orbit responsible for such large excursions. The codimension one bifurcation diagram of the 2-neurons system indeed illustrates the presence of a homoclinic orbit as a function of $I_1$ (see diagram~\ref{fig:Codim2Lambda}, and Figure~\ref{fig:BehaviorLambda} (A))\footnote{Indeed, the mean field equations with $\lambda=0$ are precisely the equations of a two-neurons network since in that case $f(\mu, \lambda^2/2)=S(\mu)$.}. Noise can be heuristically seen as perturbing the deterministic value of $I_1$. For sufficiently small values of the noise parameter, the probability of $I_1$ to visit regions corresponding to the presence of a cycle is small. But as the noise amplitude is increased, this probability becomes non-negligible and individual trajectories will randomly follow the stable cycle. Such excursions produce large input to the other neurons which will either be inhibited or excited synchronously at this time, a phenomenon that may trigger synchronized oscillations if the coupling is strong enough and the proportion of neurons involved in a possible excursion large enough. If the noise parameter is too large, the limit cycle structure will be destroyed.

Another way to understand this phenomenon consists in considering the phase plane dynamics of the two-neurons network with no noise (see Figure~\ref{fig:PhasePlane}). The system presents three fixed points, one attractive, one repulsive, and a saddle. The unstable manifold of the saddle fixed point connects with the stable fixed point in an heteroclinic orbit. The stable manifold of the saddle fixed point is a separatrix between trajectories that make small excursions around the stable fixed point, and those related to large excursions close to the heteroclinic orbit. As noise is increased, the probability distribution of each individual neuron, centered around the stable fixed point, will grow larger until it crosses the separatrix with a non-negligible probability, resulting in the system randomly displaying large excursions around the heteroclinic cycle. The fact that a homoclinic path to oscillations is found in the mean field limit can be accounted for by these observations, considering the fact that crossing the separatrix, when noise is of small amplitude, can take an arbitrary long time. The rhythmicity of the oscillations we found and the synchronization are related to the coupling in a complex interplay with the probability of large excursions. These heuristic arguments require a thorough mathematical analysis that is outside the scope of the present paper.

\begin{figure}
	\centering
		\includegraphics[width=.8\textwidth]{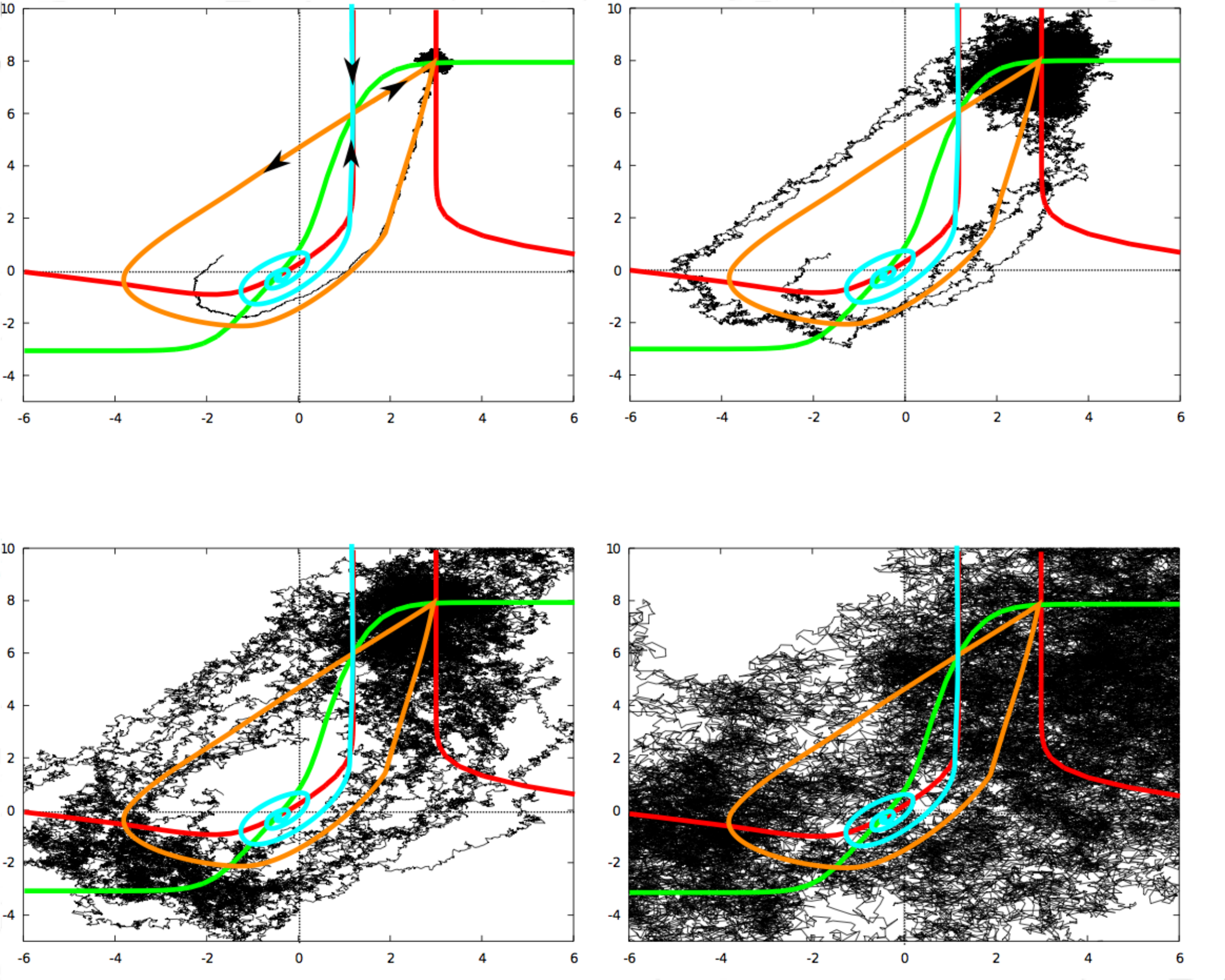}
	\caption{Trajectories in the phase plane for different values of $\lambda$ superimposed on the phase diagram. Red curve: $\mu_1$-nullcline, Green curve: $\mu_2$-nullcline, Orange cycle: unstable manifold of the saddle fixed point (heteroclinic orbit) and Cyan curve: stable manifold of the saddle fixed point (note that it is almost superposed with part of the $\mu_1$-nullcline), constituting the separatrix between those orbits that directly return to the stable fixed point and those following the heteroclinic cycle. Black: noisy trajectories. Upper left: $\lambda=0.2$: no excursion, corresponds to the fixed point regime. Upper right: $\lambda=1$: rare excursions do occur, corresponding to the bistable regime. Bottom left: $\lambda=1.6$: excursions are frequent but occur irregularly  (corresponding to the oscillatory regime). Bottom right: $\lambda=5$: the heteroclinic cycle structure is lost, corresponding to the fixed point regime. }
	\label{fig:PhasePlane}
\end{figure}

\section{Discussion}\label{sec:discussion}

In this article, we have been interested in the large-scale behavior of networks of firing rate neuron models. Using a probabilistic approach, we addressed the question of the behavior of neurons in the network as its size tends to infinity. In that limit, we showed that all neurons behaved independently and satisfied a mean field equation whose solutions are Gaussian processes such that their mean and variance satisfy a closed set of nonlinear ordinary differential equations. Uniform convergence properties were obtained.

We started by studying the solutions of the mean field equations, in particular their dependence with respect to the noise parameter using tools from dynamical systems theory. We showed that the noise had non-trivial effects on the dynamics of the network, such as stabilizing fixed points, inducing or canceling oscillations. A  codimension two bifurcation diagram was obtained when simultaneously varying an input parameter and the noise intensity, as well as simultaneously varying a total input connectivity parameter and the noise intensity. The analysis of these diagrams yielded several qualitatively distinct codimension one bifurcation diagrams for different ranges of noise intensity. Noise therefore clearly induces transitions in the global behavior of the network, structuring its Gaussian activity by inducing smooth oscillations of its mean.

These classes of behaviors were then compared to simulations of the original finite-size networks. We obtained a very good agreement between the simulations of the finite-size system and the solution of the  mean field equations, for networks as small as a few hundreds to few thousands of neurons. Transitions between different qualitative behaviors of the network matched precisely the related bifurcations of the mean field equations, and no qualitative systematic finite-size effects were encountered. Moreover, it appears that the convergence of the solution to a Gaussian process as well as the propagation of chaos property happen for quite small values of $N$, as illustrated in Figure~\ref{fig:Gaussian}. This figure represents the distribution of the voltage potential at a fixed time $T=40$ for $N=500$, simulated for $20$ sample trajectories. The Kolmogorov-Smirnov test validates the Gaussian nature of the solution with a p-value equal to $7\cdot 10^{-4}$. In order to test for the independence, we used the Pearson, Kendall and Spearman tests of dependence. We obtain the correlation values $0.0439$ (p-value $0.33$) for the first population, $0.0212$ (p-value $0.4785$) for the second, and $0.0338$ (p-value $0.45$) for the cross-correlation between populations, all of them clearly rejecting the dependence null hypothesis. This independence has deep implications in the efficiency of neural coding, a concept that we will further develop in a forthcoming paper. 
\begin{figure}
	\centering
		\includegraphics[width=.6\textwidth]{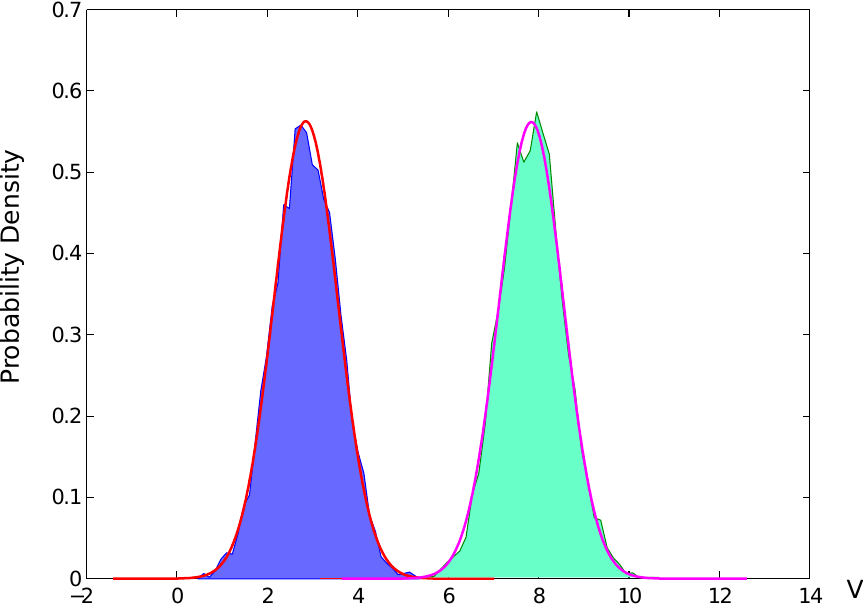}
	\caption{Empirical distribution of the values of $(V^i(T))_{i=1\ldots N}$ for $N=1000$ ($500$ neurons per population) in each population (blue and green filled distribution) versus theoretical mean field distribution. The Kolmogorov-\review{Smirnov} validates the fit of the distributions (see text).}
	\label{fig:Gaussian}
\end{figure}

These findings have several implications in neuroscience and other scientific domains as we now comment.

\subsection{Noise-induced phenomena}
We have seen that the presence of noise in the system induces different qualitative behaviors. For instance, regular oscillations of the mean firing rate, linked with a synchronization of all neurons in the network, appear in the system at some precise values of the noise parameter, in particular for systems that feature a stable fixed point in a noiseless context. This means that noise has a strong structuring effect on the global behavior of a cortical assembly, which is rather a counterintuitive phenomenon, since noise is usually chiefly seen as altering structured responses. This phenomenon adds to a few recent observations made in other settings by various authors. For instance, coherent oscillations in one-population spiking neural networks where exhibited by Pham and collaborators~\cite{pham-pakdaman:98}. The results obtained by Nesse and collaborators~\cite{nesse-bressloff:08} also confirm the presence of noise-induced phenomena in all-to-all coupled networks of leaky integrate-and-fire neurons with filtered noise and slow activity-dependent currents, i.e. noise and spikes are filtered by a second order kernel. The mean field equations they derive are based on a separation of time scale and ergodicity properties. They show in that case the presence of noise-induced burst oscillations similar to the case of~\cite{pham-pakdaman:98}, and a resonance phenomenon. Both approaches differ from our analysis in that they are dealing with spiking neurons. Pham and collaborators are able to reduce the dynamics a discrete-time network of pulse-coupled spike response model neurons, using a Markovian approach, to the study of a discrete time dynamical system. Nesse and collaborators derive their mean field limit using relevant approximations of the Fokker-Planck equations. The interest of our firing-rate model is that it rigorously allows the use of the mathematical theory of propagation of chaos to reduce the system to a set of coupled differential equations that can be studied using the dynamical systems theory.

The phenomena observed in our analysis of large-scale neuronal networks differs from the phenomena of stochastic resonance or coherence resonance well documented in the neuro-computational literature (see e.g.~\cite{lindner:04} for a review of the effect of noise in excitable systems). These phenomena correspond to the fact that there exists a particular level of noise maximizing the regularity of an oscillatory output related to periodic forcing (stochastic resonance) or to intrinsic oscillations (coherence resonance). Such situations are evidenced through the computation of the maximal value of the Fourier transform of the output. In our case, as we can see in the Fourier transform plots, the maximal value of the Fourier transform does not present a clear peak as a function of the noise level (see Figure~\ref{fig:Lambda3d}), hence the system does not exhibit resonance. Besides this observation, the regularity of the oscillation can be expected to be relatively high for large networks in our framework, since the mean activity is asymptotically perfectly periodic. \review{This type of phenomena is fundamentally related to the randomness in the inputs, and will not be observed in the Markovian mean field equations developed by~\cite{buice-cowan:07,bressloff:09}. Indeed, apart from the difference inherent to the fact that they consider Markov chains governing the firing of individual neurons as their microscopic model, the randomness and the correlations in the activity vanishes in the limit $N\to\infty$ yielding the deterministic Wilson and Cowan equation.}

Another important result of ours is the ability to define classes of parameter ranges attached to a few generic bifurcation diagrams as functions of the input to a population. This property suggests further some  reverse-engineering studies allowing to infer from measurements of the system responses to different stimuli the level of noise it is submitted to.

\review{The influence of noise in spiking one-population neural networks was studied in another context by Pham and collaborators in~\cite{pham-pakdaman:98} and Brunel and collaborators~\cite{brunel:00,fourcaud-brunel:02}. In~\cite{pham-pakdaman:98}, the authors study randomly or fully connected one-population networks of spiking neurons. They analyze the probability distribution of spike sequences and reduce this analysis to the study of the properties of a certain map under an independence assumption and in the limit where the number of neurons is infinite (which makes the independence assumption particularly relevant). They show that noise can trigger oscillations for certain values of the total connectivity parameter in a one-population case. Similar phenomena are shown in the study of sparse randomly connected integrate-and-fire neurons as shown in ~\cite{brunel:00} where the system can present synchronous regular regimes. In the mean field model studied in the present article, no oscillatory activity is possible in such one-population systems, since its dynamics can be reduced to a one-dimensional autonomous dynamical system. Smooth nonlinearities in the intrinsic dynamics or discontinuities such as the presence of a spiking threshold in~\cite{pham-pakdaman:98,brunel:00} makes the dynamics of the mean field equations more complex, in particular prevents reduction to a one-dimensional autonomous system governing the mean of the solution. Such intricacies may also be the source of oscillations in one-population systems. }

\review{We eventually emphasize the fact that the noise-induced transitions presented here are related to the nature of the mean field equations, which is not a standard stochastic differential equation. Such phenomena do not generally occur in usual stochastic differential equations, as for instance shown in~\cite{horsthemke-lefever:84}. }

\subsection{Understanding the functional role of noise}
The question of the functional role of  noise in the brain is widely debated today since it clearly affects neuronal information processing. A key point is that the presence of noise is not necessarily a problem for neurons: as an example, stochastic resonance helps neurons detecting and transmitting weak subthreshold signals. Furthermore neuronal networks that have evolved in the presence of noise are bound to be more robust and able to explore more states, which is an advantage for learning in a dynamic environment. 

The fact that noise can trigger synchronized oscillations at the network level enriches the possible mechanisms leading to rhythmic oscillations in the brain, directly relating it to the functional role of oscillations. Rhythmic patterns are ubiquitous in the brain and take on different functional roles. Among those, we may cite visual feature integration~\cite{singer-gray:95}, selective attention, working memory. Abnormal neural synchronization is present in various brain disorders~\cite{uhlhaas-singer:06}. Oscillations themselves can signal a pathological behavior. For instance, epileptic seizures are characterized by the appearance of sudden, collective, slow oscillations of large amplitude, corresponding at the cell level to a synchronization of neurons, and visible at a macroscopic scale through EEG/MEG recordings. This phenomenon is very close to the observation in our model that, as noise is slowly increased, the solutions of the mean field equations undergo a saddle-homoclinic bifurcation abruptly yielding large amplitude and small frequency oscillations. Such a collective phenomenon resembles epileptic seizures. Moreover, in our model and at the microscopic level, these regimes are characterized by a sudden precise synchronization of all neurons in the network, consistent with what is observed at the cell level in epileptic seizures. Our mean field model, based on a simple description of neural activity, was able to account for such complex biologically relevant phenomena, which suggests to use this new model as a cortical mass model and compare it to more established cortical column models such as Jansen and Rit's or Wendling and Chauvel's~\cite{touboul-faugeras-wendling:10,wendling-chauvel:08,jansen-rit:95}.

\subsection{Perspectives}
Several extensions of the present study with applications in neuroscience and in applied mathematics are envisioned. 

The first is to expand our work to include more biologically relevant models and to study the behavior of the solutions of the mean field equations in that mathematically much more complex setting that would include in particular nonlinear intrinsic dynamics and different ionic populations. Another important direction in the development of this work would consist in fitting the microscopic model to biological measurements. This would yield a new neural mass model for large scale areas and develop studies on the appearance of stochastic seizures and rhythmic activity in relationship with different parameters of the model, integrating the presence of noise in a mathematically and biologically relevant manner. 

From this point of view, the simplicity of the model, specifically the linearity of the intrinsic dynamics of each individual neuron, made possible an analytic and quantitative analysis of our mean field equations, a particular case of McKean-Vlasov equations. The drawback of this simplicity is that it does not represent precisely the activity of individual neurons. \review{Rate models are often considered valid at the macroscopic level as describing populations activity, and as such might not be good models of single cells. However, defining the instantaneous firing rate as a trial average~\cite{gerstner-kistler:02,dayan-abbott:01} can be more relevant from this viewpoint. Alternatively, our model can be seen as a model of a hypercolumn, each diffusion process characterizing the activity of a whole cortical column modeled by Wilson and Cowan equations.} For realistic individual neuron models the solutions to the mean field equations will not, in general, be Gaussian. General approaches to study them  consist either in studying their properties as random processes, or in describing their probability distribution. In the first case, one is led to investigate an implicit equation in the space of stochastic processes, and in the second case, one is led to study a complex non-local partial differential equation (Fokker-Planck), \review{as done in a recent paper by Caceres and collaborators~\cite{caceres-carrillo:11}}. In both cases one faces a difficult challenge, and the dependency of the solutions with respect to parameters is extremely hard to describe. 

Another interesting improvement of the model would consist in considering that the synaptic weights are random. These weights can be randomly drawn in a distribution and frozen during the evolution of the network. The propagation of chaos would again applies in that case, and in the rather simple case discussed in the present manuscript, the solution is a Gaussian process as proved in~\cite{faugeras-touboul-etal:09}. However, the mean and covariance cannot be described by a set of ordinary differential equations, as in here since the covariance  depends on the whole previous history of the correlations. Alternatively, the weights can be considered as stochastic processes themselves.  \review{In this case, noise will appear multiplicatively and new noise-induced transitions might appear.} The question of the synaptic noise model chosen and the dynamics of such systems will be addressed in a forthcoming paper.

The present study can be seen as a proof of concept, and it seems reasonable to extrapolate that such noise-induced transitions do occur as well for the solutions to the mean field equations of these more complex and more biologically plausible systems.

\appendix

\section{Proof of lemma \ref{lemma:ErfSigmoids}}\label{app:proof}

In this appendix we prove lemma \ref{lemma:ErfSigmoids} stating that in the case where the sigmoidal transforms are of the form $S_{\alpha}(x) = \erf(g_{\alpha}x+\gamma_{\alpha})$, the functions $f_{\alpha}(\mu_{\alpha},v_{\alpha})$ involved in the mean field equations \eqref{eq:MFE} with a Gaussian initial condition take the simple form~\eqref{eq:M-Erf}.

\noindent
\begin{proof}
We have, using the definition of the $\erf$ function
\begin{align*}
	  \Exp{S_{\alpha}(X_{\alpha})(t)}&=\int_{\R} \erf\left(g_{\alpha}\left(x \sqrt{v_{\alpha}(t)}+\mu_\alpha(t)\right)+\gamma_{\alpha}\right) \frac{e^{-x^{2}/2}}{\sqrt{2\pi}}\,dx\\
	  & = \int_{\R}\int_{-\infty}^{g_{\alpha}\left(x \sqrt{v_{\alpha}(t)}+\mu_\alpha(t)\right)+\gamma_{\alpha}}\frac{e^{-(x^{2}+y^{2})/2}}{2\pi} dx dy
	\end{align*}
	This integral is of the form:
	\[\int_{\R}\int_{-\infty}^{a\,x +b}\frac{e^{-(x^{2}+y^{2})/2}}{2\pi} dx dy\]
	and therefore, the integration domain has an affine shape as plotted in figure \ref{fig:ChangeVar}. 
		\begin{figure}[!h]
		\begin{center}
			\includegraphics[width=.3\textwidth]{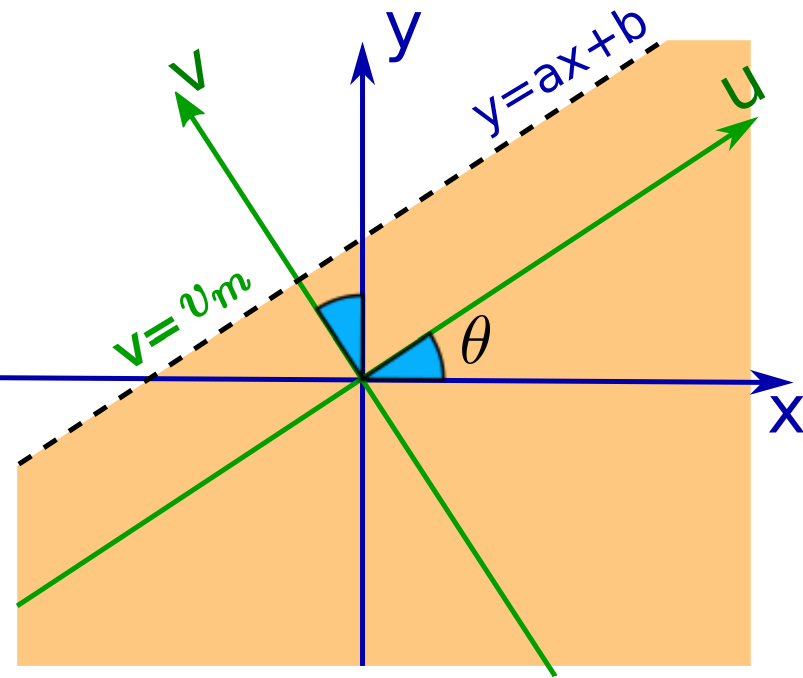}
		\end{center}
		\caption{Change of variable for the erf function.}
		\label{fig:ChangeVar}
		\end{figure} 
	In order to compute this integral, we change variables by a rotation of the axes $(x,y)$ and align the affine boundary of our integration domain with our new variables $(u,v)$ (see figure \ref{fig:ChangeVar}). Simple geometric analysis shows that the rotation angle $\theta$ for this change of variable is such that $\tan(\theta)=a$. The new integration domain is in the new coordinates given by $v\leq v_m=b \cos(\theta) = \frac{b}{\sqrt{1+a^{2}}}$:
	\begin{align*}
	  \int_{\R}\int_{-\infty}^{a\,x +b}\frac{e^{-(x^{2}+y^{2})/2}}{2\pi} dx dy &=\int_{\R}\int_{-\infty}^{\frac{g\,b}{\sqrt{1+g^{2}a^{2}}}} e^{-(u^{2}+v^{2})/2} \frac{1}{2\pi} du dv\\
	  & = \erf\left(\frac{gb}{\sqrt{1+g^{2}a^{2}}}\right)
	\end{align*}
	which reads with the parameters of the model:
	\begin{equation*}
		f_\alpha(\mu,v) = \erf\left(\frac{g_{\alpha}\, \mu + \gamma_{\alpha}}{\sqrt{1+g_\alpha^{2}v}}\right)
	\end{equation*}
\end{proof}

\section{Bifurcations Diagram as a function of $\lambda$}\label{sec:BifLambda}
In section~\ref{sec:bifs}, we observed that six different bifurcation diagrams appear as $I_1$ is varied, depending on the value of $\lambda$ characterizing the additive noise input. For the particular choice of parameters chosen in that section, the different zones are segmented for values of $\lambda$ given in table~\ref{tab:numericalLambda}. 

\begin{table}[!h]
	\begin{center}
	\begin{tabular}{|c|c|c|c|c|c|}
		\hline
		Type & C & BT & Hom TP & H TP & C\\
		\hline
		$\lambda$& 0.16 & 2.934 & 2.948 & 2.968 & 3.74\\
		\hline
	\end{tabular}
	\end{center}
	\caption{Numerical values of the separation into six $\lambda$ zones for Figure~\ref{fig:Codim2Lambda}. C stands for Cusp, BT: Bogdanov-Takens, Hom TP: turning point of the Homoclinic bifurcations curve, H TP: Hopf bifurcation curve turning point. }
	\label{tab:numericalLambda}
\end{table}

In each of these zones, typical codimension 1 bifurcation diagrams as the input $I_1$ is varied are depicted in figure~\ref{fig:BehaviorLambda}. We now describe the behavior of the system in each of these zones.
\begin{figure}
	\centering
		\includegraphics[width=.7\textwidth]{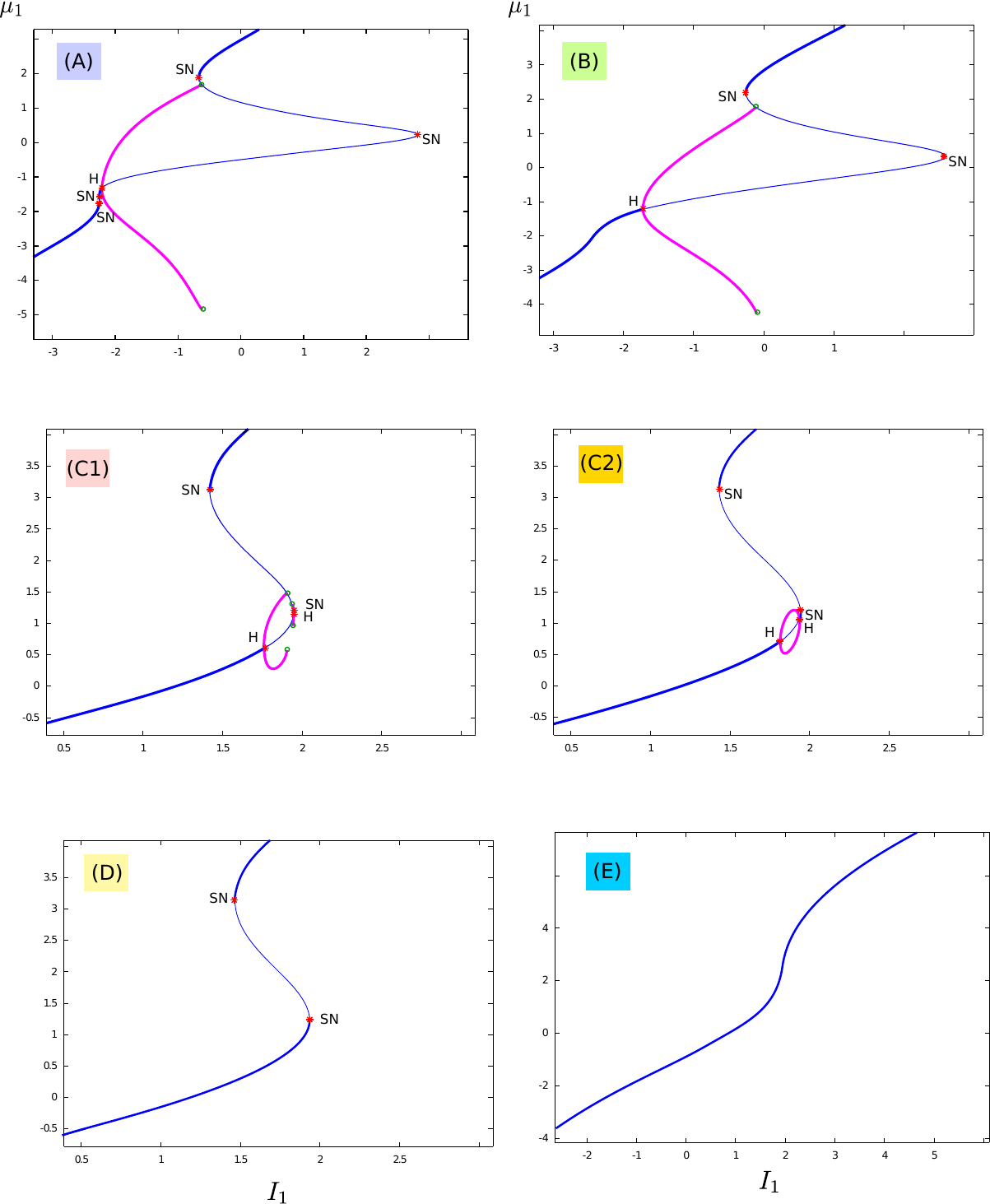}
	\caption{Typical behavior of the system in each zone (A) through (F). (A): $\lambda=0$, (B): $\lambda=1$, (C): $\lambda=2.945$, (D):$\lambda=2.955$, (E): $\lambda=3$, (F): $\lambda=4$. Red stars: bifurcations, SN: Saddle-Node, H: Hopf, green circle: Saddle-Homoclinic bifurcation. Thick blue line: stable fixed point, thin blue line: unstable fixed point, thick pink line: stable cycle. See text for precise description.}
	\label{fig:BehaviorLambda}
\end{figure}


\begin{description}
	\item[(A)] For very small values of $\lambda$, the system features four saddle-node bifurcations and one supercritical Hopf bifurcation, associated to the presence of stable limit cycles that disappear through saddle-homoclinic bifurcation arising from the Bogdanov-Takens bifurcation (after the turning point.) In an extremely limited range of parameter, the occurrence of two saddle-node bifurcation relates to a bistable regime  in that small parameter region.
	\item[(B)] In zone (B), the system differs from zone (A) in that the two inferior saddle-node bifurcation disappeared through Cusp bifurcation. Globally the same behavior are observed, except for the bistable behavior commented above (which was not a prominent phenomenon due to the reduced parameter region concerned). 
	\item[(C)] On the upper branch of saddle nodes, the systems undergoes a Bogdanov-Takens bifurcations, yielding the presence in zone (C) of a supercritical Hopf bifurcation and of a saddle-homoclinic bifurcation curve. This BT bifurcation accounts for the family of Hopf bifurcations observed in zones (A-B) and for the saddle-homoclinic bifurcations, because of the turning points observed in the full bifurcation diagrams. In region (C), two families of limit cycles coexist, both arising from supercritical Hopf bifurcation and disappearing through saddle-homoclinic bifurcation. 
	\item[(D)] Because of the topology of the bifurcation diagram, the turning point of the saddle-homoclinic bifurcations curve occurs before the turning point of the Hopf bifurcations curve. This difference yields zone (D) between the two turning points. In that zone, we still have two supercritical Hopf bifurcations, but no more homoclinic bifurcation. The families of limit cycles corresponding to each of the Hopf bifurcations are identical. 
	\item[(E)] After the turning point of the Hopf bifurcations manifold, we are left with two saddle-node bifurcations, hence a pure bistable behavior with no cycle.
	\item[(F)] Both saddle-node bifurcation disappear by merging into a cusp bifurcation. After this cusp, the system has a trivial behavior, i.e. it features a single attractive equilibrium whatever $I_1$. 
\end{description}


\bigskip

\noindent
{\bf Acknowledgements}\\
This work was partially supported by the ERC grant \#227747 NerVi.

\bibliographystyle{siam}

\end{document}